\newtheorem{thm}{Theorem}[section]
\newtheorem{cor}[thm]{Corollary}
\newtheorem{lem}[thm]{Lemma}
\newtheorem{prop}[thm]{Proposition}
\theoremstyle{definition}
\newtheorem{defn}[thm]{Definition}
\newtheorem{claim}[thm]{Claim}
\newtheorem{rem}[thm]{Remark}
\newtheoremstyle{named}{}{}{\itshape}{}{\bfseries}{.}{.5em}{\thmnote{#3 }#1}
\theoremstyle{named}
\numberwithin{equation}{section}
\newtheorem*{AAA}{Theorem A}
\newtheorem*{BBB}{Theorem B}
\newtheorem*{CCC}{Theorem C}
\newcommand{\R}{\mathbb R}
\newcommand{\p}{\partial}
\newcommand\F{\mathcal F}
\renewcommand{\H}{\mathcal{H}}
\renewcommand{\l}{\langle}
\renewcommand{\r}{\rangle}
\renewcommand{\(}{\left(}
\renewcommand{\)}{\right)}
\renewcommand{\tilde}{\widetilde}
\newcommand\Om{\Omega}
\newcommand\om{\omega}
\renewcommand\o[1]{\omega({#1})}
\def\na{\nabla}
\newcommand\e{\varepsilon}
\def\supp{{\rm{supp}}}
\def\diam{\mbox{diam}}
\newcommand{\I}[1]{\chi_{\{#1>0\}}}
\newcommand\M[1]{\mathcal M(\R^{#1})}
\renewcommand\div{\operatorname{div}}
\renewcommand\phi{\varphi}
\tikzstyle arrowstyle=[scale=1]
\tikzstyle directed=[postaction={decorate,decoration={markings,
    mark=at position .65 with {\arrow[arrowstyle]{stealth}}}}]
\tikzstyle reverse directed=[postaction={decorate,decoration={markings,
    mark=at position .65 with {\arrowreversed[arrowstyle]{stealth};}}}]
\begin{document}

\title[A nonlocal free boundary problem with Wasserstein distance]
{A nonlocal free boundary problem with Wasserstein distance}

\author{Aram L. Karakhanyan}
\address{}
\email{aram6k@gmail.com}

\address{ School of Mathematics, The University of Edinburgh, Peter Tait Guthrie Road, EH9 3FD
Edinburgh, UK}

\thanks{ }
\subjclass[2010]{35J35, 49Q20, 35J60}%
\keywords{Obstacle problem, Coulomb gas, optimal  transportation, nonlocal Monge-Amp\`ere equation.}

\begin{abstract}
We study the probability measures $\rho\in \M2$ minimizing the functional 
\[
J[\rho]=\iint \log\frac1{|x-y|}d\rho(x)d\rho(y)+d^2(\rho, \rho_0),
\]
where $\rho_0$ is a given probability measure and $d(\rho, \rho_0)$ is the 2-Wasserstein distance 
of $\rho$ and $\rho_0$.  

We prove the existence of minimizers $\rho$ and show that the potential $U^\rho=-\log|x|\ast \rho$ 
solves a degenerate obstacle problem, the obstacle being the transport potential.
Every minimizer $\rho$ is absolutely continuous with respect to the Lebesgue measure.
The singular set of the free boundary of the obstacle problem is contained in a  rectifiable set, and its Hausdorff  dimension is $< n-1$.
Moreover, $U^\rho$ solves a \textit{nonlocal} Monge-Amp\`ere equation, which after linearization leads to the 
equation $\rho_t=\div(\rho\na U^\rho)$. 
The methods we develop use \textit{Fourier transform} techniques. They work equally well in high dimensions $n\ge2$ 
for the energy 
\[
J[\rho]=\iint |x-y|^{2-n}d\rho(x)d\rho(y)+d^2(\rho, \rho_0).
\]

\end{abstract}
\maketitle

\section{Introduction}\label{sec1}

In this paper we are concerned with  the minimization of the functional
\begin{equation}\label{eq:1.1}
J[\rho]=\iint \log\frac1{|x-y|}d\rho(x)d\rho(y)+d^2(\rho, \rho_0)
\end{equation}
among all probability measures $\rho$ with finite second momentum. 
Here $d^2(\rho, \rho_0)=\inf_{\gamma}\frac12 \iint |x-y|^2d\gamma(x,y)$ is the 
square of the Wasserstein distance  between $\rho$ and a
given probability measure $\rho_0$, and $\gamma$ is a joint probability  measure 
with marginals 
$\pi_{x\#}\gamma=\rho$, $\pi_{y\#}\gamma=\rho_0$. 
The support of  $\rho$ is a priori unknown (or free) and our main goal is to analyze the regularity of the free boundary, i.e. the boundary of the set where $\rho\not =0$. 

An analogous problem arises in  high dimensions if we replace the logarithmic 
kernel by $K(x-y)={|x-y|^{2-n}}, n\ge 3$. The methods we employ do not depend on the dimension. We focus on the logarithmic kernels since the potential $U^\rho=-\rho\ast \log|x|$ may change sign and log-interaction phenomenon
has a number of important applications \cite{ST}, \cite{Serfaty} (in Section 2 we also give a connection with random matrices).

An interesting feature of the variational problem for $J[\rho]$ is that 
it leads to an obstacle problem involving the potential of the optimal transport of $\rho$ to $\rho_0$.
Let $U^\rho$ be the logarithmic (or the Newtonian potential if $n\ge 3$) of the probability measure 
$\rho$ and $\psi$ the potential of the transport map, then formally we have 
\begin{equation}\label{eq:f1}
U^\rho=\psi \quad \{\rho>0\}\ \mbox{and}\  U^\rho\ge \psi\ \mbox{elsewhere}.
\end{equation}
Since $\Delta U^\rho=-2\pi \rho$ then it follows that  
\begin{equation}\label{eq:f2}
\Delta U^\rho=\Delta \psi\quad\mbox{in}\ \{\rho>0\},  \quad \Delta U^\rho=0\quad\mbox{in}\ \{\rho=0\}.
\end{equation}
Thus combining \eqref{eq:f1} and \eqref{eq:f2} we have the obstacle problem  
\begin{equation}\label{eq:main-obs}
\left\{
\begin{array}{ccc}
\Delta U^\rho= \Delta\psi \I \rho &\quad \mbox{in}\ \R^2,  \\
\rho(U^\rho-\psi)=0  &\quad \mbox{in}\ \R^2. 
\end{array}
\right.
\end{equation}
In this formulation the position of the obstacle is a priori unknown as opposed to the classical case \cite{C-ob}. 
Note that $\psi$ is semiconvex function, hence 
from Aleksandrov's theorem it follows that $D^2 \psi$ exists a.e. Consequently,  the 
first equation in \eqref{eq:main-obs} is satisfied in a.e. sense provided that $\rho$ is absolutely 
continuous with respect to the Lebesgue measure.

The partial mass transport and Monge-Amp\`ere obstacle problems 
had been developed in the seminal work  of  Caffarelli and McCann \cite{CM}, see also  \cite{Figalli},
\cite{DF} and the references given there.

Several papers introduced variational problems for measures. 
In \cite{MC} McCann formulated a variational  principle for the energy 
\[
E[\rho]=\int A(\rho)+\frac12\iint d\rho(x)K(x-y)d\rho(y), 
\]
which allowed to prove 
existence and uniqueness for  a family of attracting gas models, and generalized the 
Brunn-Minkowski inequality from sets to measures.

Another interesting energy 
\[
F[\rho]=\iint\log\frac1{|x-y|}d\rho(x)d\rho(y)+\int|x|^2d\rho, 
\] 
appears in the large deviation laws and log-gas interactions \cite{Serfaty}, \cite{ST}. Thanks to the  quadratic potential 
every measure minimizing $F[\cdot]$ is confined in some ball. 
Furthermore, one can prove  transport inequalities and bounds for the 
Wasserstein distance in terms of $F[\rho]$  \cite{Ledoux}. 

There is a vast literature on interaction energies for probability measures governed by the Wasserstein metric \cite{JKO}, 
\cite{CMV}, 
\cite{C-D},  
\cite{C-ARMA}.
In particular, \cite{MR3805040}  contains an $L^\infty$ estimate for the equilibrium measure 
and 
\cite{MR3488544} a connection to obstacle problems.

In \cite{S} Savin considered the optimal  transport of the probability measures in periodic setting for the 
energy $
\int|\na \rho|^2+d^2(\rho, \rho_0), \rho\in H^1([0,1]^n)
$. 
The resulted obstacle problem takes the form 
\begin{equation}\label{eq:obst-Sav}
\left\{
\begin{array}{ll}
-\Delta \rho=\psi\ &\mbox{in}\ \{\rho>0\}, \ \mbox{} \\
-\Delta \rho\ge \psi \ &\mbox{elsewhare},
\end{array}
\right.
\end{equation}
where $\psi$ is the transport potential of $\rho\to \rho_0$ with given initial periodic probability measure $\rho_0$ with $H^1$ density.

The aim of this paper is to bring together two areas  in which the nonlocal interactions are confined by the square of Wasserstein's  
distance.

\smallskip

\subsection*{Main results}
The energy $J[\rho]$ has nonlocal character due to the presence of the logarithmic 
kernel. However, thanks to the Wasserstein distance $\rho$ is forced to have   compact support 
provided that $\supp \rho_0$ is compact.  Observe that 
if $\rho$ has atoms then $J[\rho]=\infty$. 
\begin{AAA}
If $\rho_0$ has compact support then there is a probability measure $\rho$ minimizing 
$J$ such that $\supp\rho$ is compact.
Moreover, $\rho$ cannot have atoms
and hence there is a measure preserving transport map $y=T(x)$ such that  $\rho_0$ is the push forward of $\rho$.  
\end{AAA}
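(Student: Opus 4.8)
The plan is to run the direct method, but first to \emph{reduce} the problem to probability measures supported in one fixed ball, so that compactness and lower semicontinuity become transparent; only then to verify the non-atomicity and the existence of a transport map. Fix $R_0$ with $\supp\rho_0\subseteq\bar B_{R_0}$. First I would note $\inf J<\infty$ by testing $J$ against, say, the normalized Lebesgue measure on $\bar B_{R_0}$ (or a mollification of $\rho_0$): it has finite logarithmic energy, and finite $2$-Wasserstein distance to $\rho_0$ since $\rho_0$ has compact support, hence finite second moment. I would also record the observation already made in the paper: if $\rho$ has an atom at $x_0$, then $\rho\otimes\rho(\{(x_0,x_0)\})>0$ and the integrand $\log\frac1{|x-y|}=+\infty$ there, so $J[\rho]=+\infty$; thus \emph{every} minimizer is non-atomic.

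The heart of the argument is the reduction to a fixed ball by balayage. Set $R:=2R_0$. Given any $\rho$ with $J[\rho]<\infty$, I would replace the part of $\rho$ outside $\bar B_R$ by its sweeping-out onto $\bar B_R$, i.e. set $\rho':=\rho|_{\bar B_R}+\widehat{\rho|_{\R^2\setminus\bar B_R}}$. In the plane, balayage onto a set of positive capacity preserves total mass, so $\rho'$ is again a probability measure; moreover the potential of the swept measure agrees with that of the original one on $\bar B_R$, so the cross energy $\iint\log\frac1{|x-y|}\,d(\rho|_{\bar B_R})(x)\,d(\text{swept})(y)$ is unchanged, while the self-energy of the swept part does not increase. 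Hence the logarithmic energy of $\rho'$ does not exceed that of $\rho$. For the Wasserstein term I would take an optimal plan $\gamma$ from $\rho$ to $\rho_0$, disintegrate it in the first variable, and replace each Dirac $\delta_z$ with $|z|>R$ by the harmonic measure $\omega_z$ of $\partial B_R$ seen from $z$ (coupled independently to the conditional plan), which is precisely the balayage of $\delta_z$. Since $\omega_z$ lives on $\partial B_R$ and has barycenter $R^2z/|z|^2$, for every $y\in\bar B_{R_0}$ one gets $\int|x-y|^2\,d\omega_z(x)=R^2-2\frac{R^2}{|z|^2}z\cdot y+|y|^2\le|z-y|^2$, the last inequality being exactly where $R\ge2R_0$ enters; summing over $z$ gives $d^2(\rho',\rho_0)\le d^2(\rho,\rho_0)$. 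Therefore $J[\rho']\le J[\rho]$ and $\supp\rho'\subseteq\bar B_R$, so $\inf\{J[\rho]:\supp\rho\subseteq\bar B_R\}=\inf J$.

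Now the existence and non-atomicity follow quickly. On the narrowly compact set $\mathcal P(\bar B_R)$ the functional $J$ is bounded below (the logarithmic term is $\ge\log\frac1{2R}$ and $d^2\ge0$) and lower semicontinuous: $d^2(\cdot,\rho_0)$ is narrowly lower semicontinuous, and on $\bar B_R\times\bar B_R$ the kernel $\log\frac1{|x-y|}$ is lower semicontinuous and bounded below, so $\rho\mapsto\iint\log\frac1{|x-y|}\,d\rho\,d\rho$ is lower semicontinuous along narrow convergence (of $\rho$, hence of $\rho\otimes\rho$). A minimizing sequence in $\mathcal P(\bar B_R)$ thus has a narrowly convergent subsequence whose limit $\rho$ minimizes $J$ over $\mathcal P(\bar B_R)$, hence — by the reduction above — over all probability measures, and $\supp\rho$ is compact. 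Being a minimizer, $\rho$ is non-atomic; since the source $\rho$ is non-atomic, the $W_2$-optimal coupling of $\rho$ and $\rho_0$ is induced by a transport map $T$ with $T_\#\rho=\rho_0$ (this uses existence of optimal maps for the strictly convex quadratic cost under non-atomicity of the source; a posteriori it also follows from Brenier's theorem, once the absolute continuity of $\rho$ proved later in the paper is invoked).

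I expect the main obstacle to be the balayage step. One must use the correct form of the energy-monotonicity of sweeping-out for the \emph{sign-changing} logarithmic kernel — the clean identity being $I(\widehat\mu)=\int U^{\widehat\mu}\,d\mu=\int U^\mu\,d\widehat\mu\le\int U^\mu\,d\mu=I(\mu)$, valid because balayage onto $\bar B_R$ preserves the potential there and does not raise it elsewhere — and one must make sure these potential-theoretic facts apply to the measure $\rho|_{\R^2\setminus\bar B_R}$, which has finite logarithmic energy and finite second moment because $\rho$ does. One must also verify the elementary but decisive Wasserstein inequality $\int|x-y|^2\,d\omega_z\le|z-y|^2$, which forces the choice $R\ge2R_0$. (The mass-preservation of balayage is special to $n=2$; for the Riesz kernel $|x-y|^{2-n}$ with $n\ge3$ the exterior balayage loses mass to infinity, and that mass has to be reinjected near $\supp\rho_0$ — a harmless modification, but one that must be made explicit.)
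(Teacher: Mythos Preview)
Your argument is correct and takes a genuinely different route from the paper. The paper localizes by \emph{normalized restriction}: Proposition~3.1 shows that for each $\mu$ with $J[\mu]<\infty$ there is $\e=\e(\mu)$ with $J[\mu_\e]<J[\mu]$, where $\mu_\e=\mu(B_\e)^{-1}1_{B_\e}\mu$. The mechanism is a pointwise lower bound on the weighted kernel $\log\frac{1}{|x-y|}+c_0(|x|^2+|y|^2)$ when $\max(|x|,|y|)$ is large, combined with the Fourier identity $I[\mu]=\int|\widehat\mu|^2\widehat K\ge0$ (for compactly supported $\mu$) to control signs; since the cut-off radius depends on the energy level, a further step is needed to make it uniform along a minimizing sequence. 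You instead \emph{sweep} the exterior mass onto $\partial B_R$ by balayage and verify, via an explicit harmonic-measure computation (barycenter $R^2z/|z|^2$, second moment $R^2$), that neither the logarithmic energy nor the transport cost increases; this gives a fixed radius $R=2R_0$ independent of the energy and uses no Fourier analysis. Your approach is shorter and yields a sharper a priori support bound; the paper's restriction argument, on the other hand, transfers without change to the Riesz kernels in $n\ge3$ (where, as you note, exterior balayage loses mass and a reinjection step is required), and its Fourier framework is reused in the proof of Theorem~B. One minor caveat: obtaining a transport \emph{map} from non-atomicity alone is not standard for the quadratic cost---one really needs $\rho$ not to charge $(n{-}1)$-rectifiable sets---but you correctly flag that the absolute continuity in Theorem~B settles this a posteriori, and the paper is equally informal on this point.
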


The second part of the theorem follows from the standard theory of optimal transport \cite{Ambrosio}.
The chief difficulty in proving the first part is to show that there is a minimizing sequence of 
probability measure with uniformly bounded supports. In order to establish this we use 
Carleson's estimate from below for the nonlocal term and  a localization argument  for the 
Fourier transforms of these measures.

Next we want to analyze the character  of equilibrium measures. Since the 
problem involves mass transport then there must be some hidden convexity related to $\rho$.
To see this we compute and explore the first variation of $J$. The weak form of the 
Euler-Lagrange  equation implies that $\hat \rho$, the Fourier transform of $\rho$,  is in 
$L^2$.

\begin{BBB}
Let $\rho$ be a minimizer. Then  $\widehat \rho\in L^2(\R^2)$
and $d\rho=f dx$ on $\supp \rho$ where $f\in L^{\infty}(\R^2)$. 
In particular, the transport map $y=T(x)$ (as in Theorem A) is given by 
\[
y=x+2\na U^\rho, 
\]
where $U^\rho=\rho\ast K$ is the potential of $\rho$ and $\na U^\rho$
is log-Lipschitz continuous. 
\end{BBB}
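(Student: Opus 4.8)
The plan is to read a weak Euler--Lagrange relation off the minimality of $\rho$ and to feed it into classical potential estimates for the logarithmic kernel; the Fourier transform enters in establishing $\widehat\rho\in L^2$, which I do first by testing minimality against a mollification. Since $J[\rho]<\infty$ we have $\iint\log\tfrac1{|x-y|}\,d\rho\,d\rho<\infty$, so $\int|\xi|^{-2}|\widehat\rho(\xi)|^2\,d\xi<\infty$ by Parseval. Fix an even mollifier $\eta$, set $\eta_\e(z)=\e^{-2}\eta(z/\e)$ and take the competitor $\rho_\e=\rho\ast\eta_\e$. Smearing the optimal plan of $\rho\to\rho_0$ in its first variable (evenness of $\eta$ kills the first-order term) gives an admissible plan, whence
\[
d^2(\rho_\e,\rho_0)\le d^2(\rho,\rho_0)+\tfrac12\e^2\!\int|z|^2\eta(z)\,dz ,
\]
while Parseval gives
\[
\iint\log\tfrac1{|x-y|}\,d\rho_\e\,d\rho_\e-\iint\log\tfrac1{|x-y|}\,d\rho\,d\rho=-c\!\int\frac{|\widehat\rho(\xi)|^2\bigl(1-|\widehat\eta(\e\xi)|^2\bigr)}{|\xi|^2}\,d\xi
\]
for a constant $c>0$, where $1-|\widehat\eta(\e\xi)|^2\ge c_1\e^2|\xi|^2$ for $|\xi|\le c_2/\e$. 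Hence $0\le J[\rho_\e]-J[\rho]\le-c\,c_1\,\e^2\!\int_{|\xi|\le c_2/\e}|\widehat\rho|^2\,d\xi+C\e^2$, which forces $\int_{\R^2}|\widehat\rho|^2\,d\xi<\infty$. By Plancherel $d\rho=f\,dx$ with $f\in L^2(\R^2)$; in particular $\rho$ is absolutely continuous, so Theorem A supplies a Brenier map $T=\nabla\varphi$ ($\varphi$ convex and lower semicontinuous), and the Kantorovich potential of $\rho\to\rho_0$ for the cost $\tfrac12|x-y|^2$ is unique up to a constant.

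\textbf{Weak Euler--Lagrange relations.} With $\rho$ absolutely continuous the two natural first variations become clean. For linear perturbations $\rho_t=(1-t)\rho+t\mu$, $\mu$ a compactly supported probability measure of finite logarithmic energy, differentiating $J[\rho_t]$ at $t=0^+$ and letting $\mu$ range over all such measures --- using that $U^\rho$ is lower semicontinuous and bounded below on compact sets --- yields the obstacle relation \eqref{eq:f1}, $U^\rho=\psi$ on $\supp\rho$ and $U^\rho\ge\psi$ elsewhere, where $\psi=\tfrac12(\lambda-\psi_0)$ with $\psi_0$ the Kantorovich potential of $\rho\to\rho_0$ for the cost $\tfrac12|x-y|^2$; $\psi$ is semiconvex since $\psi_0$, an infimum of functions of the form $\tfrac12|x|^2+(\text{affine})$, is semiconcave. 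For inner variations $\rho_t=(\mathrm{id}+t\xi)_\#\rho$, $\xi\in C_c^\infty(\R^2;\R^2)$, the logarithmic term differentiates to $\iint\nabla K(x-y)\cdot(\xi(x)-\xi(y))\,d\rho(x)\,d\rho(y)$ (a bounded integrand near the diagonal, so this is legitimate for any finite-energy $\rho$), which equals $2\int\nabla U^\rho\cdot\xi\,d\rho$ since $\rho$ is absolutely continuous, while the Wasserstein term differentiates by the envelope principle applied to the admissible plan $(\mathrm{id}+t\xi,T)_\#\rho$, giving $\int(x-T(x))\cdot\xi\,d\rho$; minimality then forces $T(x)=x+2\nabla U^\rho(x)$ for $\rho$-a.e.\ $x$.

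\textbf{From $L^2$ to $L^\infty$, and log-Lipschitz regularity.} Since $f\in L^2$, Calder\'on--Zygmund theory gives $U^\rho=K\ast f\in W^{2,2}_{\mathrm{loc}}$, so $D^2U^\rho$ exists a.e.; differentiating $\nabla\varphi(x)=x+2\nabla U^\rho(x)$ at an a.e.\ density point of $\{f>0\}$ and using $D^2\varphi\ge0$ gives $D^2U^\rho\ge-\tfrac12\,\mathrm{Id}$ there, hence $-2\pi f=\Delta U^\rho\ge-1$ a.e.\ on $\{f>0\}$, i.e.\ $0\le f\le\tfrac1{2\pi}$ everywhere. (Equivalently this is the relation $-2\pi f=\Delta U^\rho=\Delta\psi$ on $\{f>0\}$ together with $\Delta\psi\ge-1$, which is the semiconvexity of $\psi$.) Thus $\widehat\rho\in L^2$ and $d\rho=f\,dx$ with $f\in L^\infty$, and the transport map is $y=T(x)=x+2\nabla U^\rho(x)$. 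Finally, with $f\in L^\infty$ of compact support, write $\nabla U^\rho(x)-\nabla U^\rho(x')=-\int\bigl(\tfrac{x-y}{|x-y|^2}-\tfrac{x'-y}{|x'-y|^2}\bigr)f(y)\,dy$ and split the integral at radius $r=|x-x'|$: the contribution of $B_{2r}(x)$ is $\le C\|f\|_\infty r$, and on its complement the mean value theorem together with $|D^2K(w)|\le C|w|^{-2}$ gives $\le C\|f\|_\infty r\log\tfrac1r$; hence $\nabla U^\rho$ is log-Lipschitz, and in particular continuous, so $y=x+2\nabla U^\rho(x)$ holds with continuous right-hand side.

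\textbf{Main obstacle.} The delicate step is the first one: showing that mollification strictly lowers $J$ unless $\widehat\rho\in L^2$. This hinges on having \emph{both} estimates above be genuinely of order $\e^2$, with no spurious $O(\e)$ term. The evenness of $\eta$ removes the first-order term from the Wasserstein bound --- equivalently, the increase $\langle\psi_0,\rho\ast\eta_\e-\rho\rangle$ is $O(\e^2)$ by the semiconcavity of $\psi_0$, which is perhaps cleaner than the plan-smearing argument; on the Fourier side, the low-frequency expansion $1-|\widehat\eta(\e\xi)|^2\asymp\e^2|\xi|^2$ converts the always-nonpositive change in logarithmic energy into a definite negative multiple of $\e^2\int_{|\xi|\le c_2/\e}|\widehat\rho|^2\,d\xi$, so that unboundedness of this quantity would contradict minimality. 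Everything after $\rho\in L^2$ is routine obstacle-problem and Calder\'on--Zygmund manipulation.
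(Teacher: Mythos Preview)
Your argument is essentially correct and reaches the same conclusions, but the route is genuinely different from the paper's. The paper does not use mollification, linear perturbations, or the semiconvexity-touching argument. Instead it builds a single family of competitors by \emph{translating} a localized piece of the optimal plan $\gamma$ (restriction to $B_\e(x^\ast)\times B_\e(y^\ast)$, pushed forward by $x\mapsto x-x^\ast+x_0$). Taking $x_0=x^\ast\pm he_j$ and summing produces the discrete-Laplacian inequality $-\int \delta_h U^\rho\,\xi\,d\rho\le \tfrac12\int\xi\,d\rho$; with $\xi\equiv 1$ and the Parseval identity for the \emph{truncated} kernel $K_{r_0}$ this yields $\int|\widehat\rho|^2<\infty$ after letting $h\to 0$ and then $r_0\to\infty$. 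The same inequality, now with $\xi$ a cutoff on $B_\e(x^\ast)$, gives after passing $h\to0$ the local bound $2\pi\int\rho^2\xi\le\int\rho\,\xi$, from which $\rho\in L^\infty$ follows via a Lebesgue-density argument. Finally the paper reads off the monotonicity inequality $2U^\rho(x_0)+\tfrac12|x_0-y^\ast|^2\ge 2U^\rho(x^\ast)+\tfrac12|x^\ast-y^\ast|^2$, identifies $2U^\rho$ with the maximal Kantorovich potential on $\supp\rho$, and cites Judovi\v{c} for the log-Lipschitz bound. Your mollification argument is arguably slicker for $\widehat\rho\in L^2$, and your $L^\infty$ step via $D^2\varphi\ge0$ is more conceptual; the paper's approach, on the other hand, derives everything from one competitor construction and avoids appealing to differentiability of $t\mapsto d^2((1-t)\rho+t\mu,\rho_0)$.

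Two technical points you should tighten. First, the sentence ``$\int|\xi|^{-2}|\widehat\rho(\xi)|^2\,d\xi<\infty$ by Parseval'' is false as written: $\widehat\rho(0)=1$ and $|\xi|^{-2}$ is not integrable near the origin in $\R^2$. The correct device (which the paper uses) is the truncated kernel $K_{r_0}=1_{B_{r_0}}K$ with $\widehat{K_{r_0}}=\tfrac{c_1}{4\pi|\xi|^2}(1-\mathcal B(2\pi r_0|\xi|))$; in your difference $J[\rho_\e]-J[\rho]$ the low-frequency divergence cancels anyway, so your inequality survives with $\widehat{K_{r_0}}$ in place of $c|\xi|^{-2}$ and the rest of the step goes through. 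Second, ``differentiating $\nabla\varphi(x)=x+2\nabla U^\rho(x)$ at a density point of $\{f>0\}$'' is loose, since this identity holds only $\rho$-a.e.\ and cannot be differentiated directly on a merely measurable set. The clean fix is the one you allude to parenthetically: from the linear perturbation you get the \emph{global} inequality $U^\rho\ge\psi$ with equality on $\supp\rho$, so at a.e.\ contact point where both $U^\rho\in W^{2,2}_{\rm loc}$ and the semiconvex $\psi$ have second-order Taylor expansions one has $D^2U^\rho\ge D^2\psi\ge-\tfrac12 Id$, hence $-2\pi f=\Delta U^\rho\ge-1$.
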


The log-Lipschitz continuity of $\na U^\rho$ follows from 
Judovi\v{c}'s therem \cite{Yudovich}. In fact from the Calder\`on-Zygmund estimates 
it follows that 
$D^2U^\rho\in L^p_{loc}$ for every $p>1$. The local mass balance condition for the optimal transport 
leads to a nonlocal Monge-Amp\`ere equation 
\begin{equation}\label{eq:nonloc-MA}
\det(Id+2D^2 U^\rho)=\frac{\rho(x)}{\rho_0(x+2\na U^\rho)}. 
\end{equation}
\eqref{eq:nonloc-MA} implies that $\supp \rho\subset \supp \rho_0$. 
If we linearize \eqref{eq:nonloc-MA} using a time discretization scheme,  the resulted equation is
$\rho_t=\div(\rho\nabla U^\rho)$. 

The analysis of the structure of singular set in the obstacle problems is the central problem of the regularity theory. 
Let $\hbox{MD}(\supp\rho\cap B_r(x))$ be the infimum of distances between pairs of 
parallel planes such that $\supp\rho\cap B_r(x)$ is contained in the strip determined by them \cite{C-ob}. Let 
\begin{equation}\label{small-o}
\o R=\sup_{r\le R}\sup_{x\in \supp \rho} \frac{\hbox{MD}(\supp\rho\cap B_r(x))}r.
\end{equation}
Observe that if $n=2$ then \eqref{eq:nonloc-MA} is equivalent to 
$2\pi\rho_0[4\det D^2U^\rho+2 \Delta U^\rho+1]=-\Delta U^\rho$. From here we can deduce  the equation
\begin{equation}\label{bobb}
\det\left[2D^2U^\rho+Id\left(1+\frac1{4\pi\rho_0}\right)\right]=\left(1+\frac1{4\pi\rho_0}\right)^2-1>0.
\end{equation}
Consequently, the standard regularity theory for the Monge-Amp\`ere equation (see \cite{TW}) implies that 
we can get higher regularity for $\rho$ if $\rho_0$ is sufficiently smooth.  
\begin{CCC}
Let $\o R$ be the modulus of continuity of the slab height (see \eqref{small-o}), 
$B_i=B_{r_i}(x_i)$ a collection of disjoint balls included in $B_R$ with $x_i\in S$,  where 
$S$ is the singular set. Then for every $\beta>n-1$ we have 
\[
\sum r_i^\beta\le C\frac{R^\beta}{\om^{n-1}(R)}\frac1{1-\om^{\beta-(n-1)}(R)}.
\]
Furthermore, if $\o R= R^\sigma$, then there is $\sigma'=\sigma'(n, \sigma)$such that the singular set $S\subset M_0\cup\bigcup_{i=1}^\infty M_i$
where $\H^{n-1-\sigma'}(M_0)=0$ and $M_i$ is contained in some $C^1$ hypersurface 
such that the measure theoretic normal exists at each $x\in S\cap M_i, i\ge 1$.

\end{CCC}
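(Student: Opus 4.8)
The plan is to read \eqref{eq:main-obs} as an obstacle problem for $w:=U^\rho-\psi$: by \eqref{eq:f1} one has $w\ge0$, the contact set $\{w=0\}$ is (the closure of) $\{\rho>0\}$, and the free boundary is $\Gamma:=\p\{\rho>0\}$. Theorem B (and its $n$-dimensional counterpart) places us in a favourable setting --- $\rho\in L^\infty$ with compact support, $\na U^\rho$ log-Lipschitz, $D^2U^\rho\in L^p_{loc}$ for every $p<\infty$, and, through \eqref{bobb}, interior $C^{2,\alpha}$ bounds when $\rho_0$ is smooth --- so the blow-up machinery of \cite{C-ob} applies. We call $x_0\in S$ when the Lebesgue density of $\{\rho>0\}$ at $x_0$ vanishes; by the usual dichotomy for blow-ups this is equivalent to every blow-up of $w$ at $x_0$ being a non-trivial non-negative quadratic polynomial $p_{x_0}(x)=\tfrac12\l A(x_0)(x-x_0),x-x_0\r$, $A(x_0)\ge0$, with $\operatorname{tr}A(x_0)$ prescribed (a.e.) by the datum $-\Delta\psi$ in the first line of \eqref{eq:main-obs}. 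Near such a point $\{\rho>0\}$ is pressed against the subspace $V_{x_0}:=x_0+\ker A(x_0)$, and this is exactly what the minimal-diameter modulus records: $\supp\rho\cap B_r(x_0)$ lies in a slab of width $\le\hbox{MD}(\supp\rho\cap B_r(x_0))\le r\,\om(r)$, and as $\om$ is non-decreasing this width is $\le r\,\om(R)$ for all $r\le R$.

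\emph{First assertion.} This is a dyadic covering estimate built on the slab structure. Put $\lambda:=\om(R)$, which is $<1$ once $R$ is small, and fix $x_0\in S$. At scale $R$, $S\cap B_R(x_0)\subset\supp\rho\cap B_R(x_0)$ lies in a slab of width $\lambda R$ and is therefore covered by $\le C_n\lambda^{-(n-1)}$ balls of radius $\lambda R$; discard those missing $S$, re-centre the rest on $S$ at the cost of a fixed dilation, and iterate. Since the slab width at scale $\lambda^kR$ around a singular point is $\le\lambda^kR\,\om(\lambda^kR)\le\lambda^{k+1}R$, each ball splits again into $\le C_n\lambda^{-(n-1)}$ balls of radius $\simeq\lambda^{k+1}R$, whence after $k$ steps $S\cap B_R(x_0)$ is covered by at most $(C_n\lambda^{-(n-1)})^k$ balls of radius $\simeq\lambda^kR$. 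Given a disjoint family $B_i=B_{r_i}(x_i)\subset B_R$ with $x_i\in S$, we sort the indices by $\lambda^{k+1}R<r_i\le\lambda^kR$: each ball of the $k$-th cover meets only boundedly many such $B_i$, so $\sum_ir_i^\beta\le\sum_k(C_n\lambda^{-(n-1)})^k(\lambda^kR)^\beta=CR^\beta\sum_k(C_n\lambda^{\,\beta-(n-1)})^k$; because $\beta>n-1$ and $R$ is small the ratio is $<1$, and summing the geometric series produces the stated bound $\sum r_i^\beta\le C\,R^\beta\,\om^{-(n-1)}(R)\,(1-\om^{\beta-(n-1)}(R))^{-1}$ after one Vitali step to pass between disjoint families and covers.

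\emph{Second assertion.} We stratify $S=\bigcup_{j=0}^{n-1}S_j$ by $S_j:=\{x_0\in S:\dim\ker A(x_0)=j\}$. A Federer--Almgren dimension-reduction argument --- iterating blow-ups, which on $S_j$ are polynomials vanishing on a $j$-dimensional space --- yields $\dim_{\H}S_j\le j$, so $\H^{n-1-\sigma'}\big(\bigcup_{j\le n-2}S_j\big)=0$ for any $\sigma'\in(0,1)$; these strata go into $M_0$. On the top stratum $S_{n-1}$ we would invoke a Weiss--Monneau type monotonicity formula for $w$ at each $x_0\in S_{n-1}$; combined with $\om(r)\le r^\sigma$ it forces a uniform rate of decay of $w-p_{x_0}$ at scale $r$, hence a Hölder modulus (quantified by $\sigma$) for the assignment $x_0\mapsto p_{x_0}$ --- equivalently for the hyperplanes $\ker A(x_0)$ --- on the portion of $S_{n-1}$ where the smallest eigenvalue of $A(x_0)$ stays bounded below. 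On that portion Whitney's extension theorem produces a countable family of $C^1$ hypersurfaces $M_1,M_2,\dots$ containing it, along each of which $\ker A(x_0)$ is the tangent, so the measure-theoretic normal to $\supp\rho$ exists at the relevant points. The leftover points of $S_{n-1}$, where an eigenvalue of $A(x_0)$ degenerates along the stratum, behave like a lower stratum; tracking their size through the same decay rate produces an exponent $\sigma'=\sigma'(n,\sigma)$ for which these points are $\H^{n-1-\sigma'}$-null, and they too are absorbed into $M_0$.

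\emph{Main obstacle.} The heart of the matter is the second assertion, and inside it the degeneracy of \eqref{eq:main-obs}: the obstacle $\psi$ is only semiconvex and $-\Delta\psi\in L^\infty$ need not be continuous, so the Weiss--Monneau almost-monotonicity must be re-derived with genuine error terms, and those error terms have to be swallowed by the modulus --- which is precisely why a rate $\om(r)=r^\sigma$ (and not a bare modulus of continuity) is hypothesised, and where $\sigma'(n,\sigma)$ is ultimately manufactured. The second delicate point is promoting the \emph{pointwise} statement ``every blow-up at $x_0$ is a polynomial'' to an $x_0$-uniform rate along $S_{n-1}$, i.e. the Hölder dependence of $p_{x_0}$ on $x_0$; that is the step that simultaneously yields the $C^1$ hypersurfaces and pins down $\sigma'$. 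By comparison the first assertion is a soft consequence of the slab geometry and needs no monotonicity formula.
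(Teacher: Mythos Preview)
Your argument for the first assertion is essentially the paper's: both sort the disjoint balls by dyadic scale $r_i\in(R\om(R)^{m+1},R\om(R)^m]$, use the slab to bound the count at level $m$ by $C\om(R)^{-(n-1)(m+1)}$, and sum the geometric series; you phrase the count via an iterated covering while the paper counts intersections with the mid-hyperplane of the slab directly.

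For the second assertion your route diverges sharply from the paper's. The paper never looks at $w=U^\rho-\psi$, never performs a blow-up, and never invokes a monotonicity formula. It works instead with the transport map: Lemma~\ref{lem:ellipse} uses cyclic monotonicity of the optimal plan (together with John's ellipsoid) to trap $S\cap B_R$ in a thin slab; then $M_0$ is \emph{defined} as those $x\in S$ at which, along some sequence $R_k\downarrow0$, the projection of $S\cap B_{R_k}(x)$ onto some two-plane is $O(R_k^{1+\sigma})$ --- so $S$ is thin in at least two directions there --- and the first assertion is re-applied at the refined scale $r_i^{1+\sigma/2}$ to manufacture $\sigma'$. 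Points of $S\setminus M_0$ are then those admitting a measure-theoretic normal and are assigned to the $M_i$. Your framework, if it goes through, delivers more structure (the blow-up matrix $A(x_0)$, a clean stratification, H\"older dependence of the tangent hyperplane); the paper's buys robustness by never requiring $\psi$ to be better than semiconvex.

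That is also where your proposal has a genuine gap. You correctly flag that Weiss--Monneau almost-monotonicity must be re-derived for a merely semiconvex obstacle, but you do not carry this out; and already the prior step --- that blow-ups of $w$ at singular points exist and are non-negative quadratic polynomials with trace prescribed by $-\Delta\psi(x_0)$ --- presupposes a pointwise value of $\Delta\psi$ and a $C^{1,1}$ bound on $w$ that are not available here (Theorem~B gives only $D^2U^\rho\in L^p_{loc}$ for finite $p$, and $\psi$ gives only $D^2\psi\ge -cI$). The hypothesis $\om(r)=r^\sigma$ constrains the geometry of $\supp\rho$, not the regularity of $\psi$ off the contact set, so it does not by itself close this gap. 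The paper's transport-based argument sidesteps the difficulty entirely.
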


The paper is organized as follows: 
In Section \ref{sec:2} we recall some facts on the Wasserstein distance 
and Fourier transformation of measures. One of the key facts that we use is that 
the logarithmic term can be written as a weighted $L^2$ norm of the Fourier transformation of $\rho.$

Section \ref{sec:3} contains the proof of Theorem A. The chief difficulty in the proof is to control the supports of 
the sequence of minimizing measures.
In Section \ref{sec:4} we discuss the relation of $J[\rho]$ with the large deviations laws
for the random matrices with interaction and provide a simple model with energy $J$.

Section \ref{sec:6} contains some basic discussion of 
cyclic monotonicity and maximal Kantorovich potential. Then we derive 
the  Euler-Lagrange equation. From here we infer that $\rho$ has $L^\infty$
density with respect to the Lebesgue measure. Theorem B 
follows from Theorem \ref{thm:mon} and Corollary \ref{cor:pizza}.
Section \ref{sec:7} is devoted to the nonlocal Monge-Amp\`ere equation and its 
linearization $\rho_t=\div(\rho\na U^\rho)$.
Finally, in Section \ref{sec:8} we study the regularity of free boundary and prove Theorem C. 

\subsection*{Notation} 
We will denote by $\mathcal M(\R^n)$ the set of probability measures on $\R^n$, 
and let $\mu_{\# f}$ be the push forward of $\mu\in \M n$ under a mapping $f$. 
$d(\mu, \rho)$ denotes  the 2-Wasserstein distance of $\mu, \rho\in \M n$, $B_r(x_0)$  the open 
ball of radius $r$ centered at $x_0$, $K$ the kernels
\begin{equation}\label{eq:kernel-def}
K(x-y)=
\left\{
\begin{array}{ccc}
\log\frac1{|x-y|} \quad &\mbox{if}\ n=2,\\
|x-y|^{2-n} &\mbox{if}\ n\ge 3.
\end{array}
\right.
\end{equation}
$U^\rho=\rho\ast K$ is the potential of 
$\rho\in \M n$, $\H^n$  the $n$ dimensional Hausdorff measure, 
$1_E$  the characteristic function of $E\subset \R^n$. 
The restriction of $\mu \in \M n$ on some $E\subset \R^n$ will be denoted by $\mu\with E:=1_E\mu$,
and $\widehat \mu(\xi) =\int e^{-2\pi i \l x, \xi\r}d\mu(x)$ is the Fourier transform of $\mu\in \M n$.

\section{Set-up}\label{sec:2}

Let $f:\R^n\to\R^n$ be a map,  for a   Borel set $E\subset \R^n$ 
the push forward is defined by $\mu_{\# f}(E)=\mu(f^{-1}(E))$. 
For every joint probability measure $\gamma\in \mathcal M(\R^n\times \R^n)$ we define the projections
$\pi_x :(x, y)\to x$, $\pi_y :(x, y)\to y$.

We require $\gamma$ to have prescribed marginals $\rho$, $\rho_0\in \M n$, i.e.
\[
\gamma_{\#\pi_x}=\rho(x), \quad 
\gamma_{\#\pi_y}=\rho_0(y).
\]

For probability measures $\rho, \rho_0\in \mathcal M(\R^n)$ we define their Wasserstein distance as follows  
\begin{equation}
d(\rho, \rho_0)=\left(\inf_\gamma\frac12\iint |x-y|^2d\gamma(x, y)\right)^{\frac12}, 
\end{equation}
where $\gamma$'s are transport plans such that $\gamma_{\#\pi_x}=\rho$, 
$\gamma_{\#\pi_y}=\rho_0$.
We recall the following properties of the Wasserstein distance:
\medskip 
\begin{itemize}
\item[1)] $d$ is a distance, 
\item[2)] $d^2$ is convex, i.e.
\[
d^2(tu+(1-t)v, w)\le td^2(u, w)+(1-t)d^2(v, w), \quad t\in [0, 1], u, v\in \M n,
\]
\item[3)] if $u_k\to u, v_k\to v$ in $L^1_{loc}$ as $k\to \infty$ then 
\[
\lim_{k\to 0}d(u_k, v_k)=d(u, v),
\]
\item[4)] if $u_k\to u, v_k\to v$ weakly, i.e. $\int u_k\phi\to \int u\phi, \int v_k\phi\to \int v\phi$
for every $\phi\in C_0$, then 
\[
d(u, v)\le \liminf_{k\to \infty}d(u_k, v_k).
\]
\end{itemize}
See  \cite{Villani} for more details. 

We also need the following definition of Wasserstein class:
\begin{defn} 
Let $(\Omega, |\cdot |)$ be a Polish space (i.e. complete separable metric space equipped with its Borel $\sigma$-algebra). The Wasserstein space of order $2$ is defined as
\[
P_2(\Om)=\left\{\mu \in \mathcal M : \int_{\Om}|x_0-x|^2\mu(dx)<\infty\right\},
\]
where $x_0\in \Om$ is arbitrary. This space does not depend on the choice of $x_0$. 
Thus $d$ defines a finite distance on $P_2$. 
\end{defn}
\begin{rem}
If $\Om$ is compact then so is $P_2$. If $\Om$ is only locally compact then 
$P_2(\Om)$ is not locally compact, see \cite{Villani}.  This introduces several difficulties in the proof of the existence of a minimizer.  
\end{rem}

\begin{rem}\label{rem:Fourier}
Recall that the Fourier transformation of the truncated kernel $K_{r_0}=1_{B_{r_0}}K, n=2$ can be computed explicitly 
\begin{equation}\label{eq:K-Fourier}
\widehat{ K_{r_0}}=\frac{c_1}{4\pi|\xi|^2}(1-\mathcal B(2\pi r_0|\xi|)), 
\end{equation}
where $c_1>0$ is a universal constant, $\mathcal B$ is the Bessel function of the first kind such that $\mathcal B(0)=1, \mathcal B'(0)=0$
and $\lim_{t\to+ \infty}\mathcal B(t)=0$ \cite{Carleson}.

If $\mu\in \mathcal M(\R^2)$ has compact support then from the weak Parceval identity we have that 
\begin{equation}\label{grbr}
\iint K(x-y)\mu(x)\mu(y)=\int|\hat \mu |^2\widehat K\ge 0, 
\end{equation}
where $K(x-y)=\log\frac1{|x-y|}$ and $\widehat\mu, \widehat K$ are the Fourier transforms of $\mu, K$ respectively, see  \cite{K17} for the proof. This observation shows that the energy $J$ is nonnegative for compactly supported $\mu\in \M 2$.

We say that  $\mu \in \M n$ has finite  energy if $I[\mu]<\infty$ where $I[\rho]=\iint K(x-y)d\rho(x)d\rho(y)$. 
Then $\M n$ with $\mathcal I[\rho, \mu]=\iint K(x-y)d\rho(x)d\mu(y)$ has Hilbert structure, \cite{Landkof} page 82, and 

\[
\|\mu\|=\sqrt{\mathcal I[\mu, \mu]}
\]
is a norm.  It is  remarkable that the standard mollifications $\mu_k$ of $\mu$ converge 
to $\mu$ strongly, i.e. $\lim_{k \to \infty}\|\mu-\mu_k\|=0$, see \cite{Landkof} Lemma 1.$2'$ page 83.
\end{rem}

\section{Existence of minimizers}\label{sec:3}

\begin{prop}\label{main-prop}
Let $\mu_0\in \M 2$ and $\supp\mu_0\subset B_{R_0}$ for some $R_0>0$.
Let $\mu\in P_2(\R^2)$ and $J$ be given by \eqref{eq:1.1}, then 
\begin{itemize}
\item[(i)] $J[\mu]>-\infty$ provided that $J[\mu]<+\infty$, 
\item[(ii)] there is $\e>0$ depending on $R_0$ and $\mu$ such that 
$J[\mu_{\e}]<J[\mu]$ provuded that  $\supp\mu\not\subset {B_\e}$, 
where $\mu_{\e}=1_{B_\e}\mu/\mu(B_{\e})$ is the normalized restriction of $\mu$ to $B_{\e}$,
\item[(iii)] if $0\le J[\mu_k]\le C$ for some sequence $\{\mu_k\}\subset P_2(\R^2)$ 
and $\e_k$ are the corresponding numbers from (ii) then there is 
$\e_0>0$ such that $\e_k\le \e_0$ uniformly in $k$,  
where $\e_0$ depends only on $C$ and $R_0$.  
\end{itemize}

\end{prop}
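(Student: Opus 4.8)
The three parts grow in difficulty, the core being the Wasserstein comparison in (ii). Throughout I write $I[\mu]=\iint\log\frac1{|x-y|}\,d\mu\,d\mu$, $W=d(\mu,\mu_0)$, $M_2=\int|x|^2\,d\mu$, $m=\mu(B_\e)$, and $I^{\mathrm{near}}[\nu]$, $I^{\mathrm{far}}[\nu]$ for the parts of $I[\nu]$ over $\{|x-y|\le1\}$ and $\{|x-y|>1\}$. For (i), the plan is to note that $I^{\mathrm{near}}[\mu]\ge0$ while on $\{|x-y|>1\}$ one has $\log|x-y|\le\log2+\log_+|x|+\log_+|y|$ and $\int\log_+|x|\,d\mu\le\tfrac12\log(1+M_2)$ by Jensen, so that $I[\mu]\ge-\log\!\big(2(1+M_2)\big)>-\infty$, whence $J[\mu]\ge I[\mu]>-\infty$ since $d^2\ge0$. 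I would also record the elementary $M_2\le 4W^2+2R_0^2$ (from $|x|\le|x-y|+|y|$ under an optimal plan), which makes this bound explicit in $W,R_0$ and feeds (iii).

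For (ii) I may assume $J[\mu]<\infty$ and will take $\e$ at least a threshold $\e_\ast(\mu,R_0)$ fixed below; if $\supp\mu\subset B_\e$ there is nothing to prove, so assume $1-m=\mu(B_\e^c)>0$. The first remark is that if $m<\tfrac12$ then over half the mass of $\mu$ lies at distance $\ge\e-R_0$ from $\supp\mu_0$, so $W^2\ge\tfrac14(\e-R_0)^2$; hence $\e\ge 8\sqrt{W^2+R_0^2}$ (which also gives $\e\ge2R_0$) forces $m\ge\tfrac12$. For the transport term I would fix an optimal plan $\gamma$ for $(\mu,\mu_0)$, split $\gamma=\gamma_1+\gamma_2$ according to $x\in B_\e$ or $x\in B_\e^c$, set $\nu_i=\pi_{y\#}\gamma_i$ (so $\nu_1+\nu_2=\mu_0$, both supported in $\overline{B_{R_0}}$) and $Q=\tfrac12\iint_{B_\e\times\R^2}|x-y|^2\,d\gamma\ (\le W^2)$, and build a competitor plan for $(\mu_\e,\mu_0)$ that sends $1_{B_\e}\mu$ to $\nu_1$ along $\gamma_1$ and the inflated copy $\tfrac{1-m}m1_{B_\e}\mu$ to $\nu_2$ by first following $\tfrac{1-m}m\gamma_1$ (cost $\tfrac{1-m}m Q$) and then moving $\tfrac{1-m}m\nu_1$ onto $\nu_2$ inside $\overline{B_{R_0}}$ (cost $\le2R_0^2(1-m)$). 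Using $|x-y|\ge|x|/2$ for $x\in B_\e^c,\ y\in B_{R_0}$ to get $Q\le W^2-\tfrac18\int_{B_\e^c}|x|^2\,d\mu$, together with $m\ge\tfrac12$ and $1-m\le\e^{-2}\int_{B_\e^c}|x|^2\,d\mu$, I expect
\[
d^2(\mu_\e,\mu_0)-d^2(\mu,\mu_0)\ \le\ -\tfrac18\!\int_{B_\e^c}\!|x|^2\,d\mu+4(1-m)(W^2+R_0^2)\ \le\ -\tfrac1{16}\!\int_{B_\e^c}\!|x|^2\,d\mu
\]
for $\e\ge8\sqrt{W^2+R_0^2}$ — a strictly negative gain.

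It then remains to check the logarithmic energy moves less. From $I[\mu_\e]=m^{-2}I[1_{B_\e}\mu]$ and $I[\mu]=I[1_{B_\e}\mu]+2\mathcal I[1_{B_\e}\mu,1_{B_\e^c}\mu]+I[1_{B_\e^c}\mu]$ one has
\[
I[\mu_\e]-I[\mu]=\tfrac{1-m^2}{m^2}I[1_{B_\e}\mu]-2\,\mathcal I[1_{B_\e}\mu,1_{B_\e^c}\mu]-I[1_{B_\e^c}\mu],
\]
and the point I would press is that the near-diagonal (hence possibly large) parts of $\mathcal I[1_{B_\e}\mu,1_{B_\e^c}\mu]$ and of $I[1_{B_\e^c}\mu]$ are $\ge0$ and enter with a minus sign, so they only help; what is left is $\tfrac{1-m^2}{m^2}I[1_{B_\e}\mu]\le8(1-m)I^{\mathrm{near}}[\mu]$ (via $I[1_{B_\e}\mu]\le I^{\mathrm{near}}[\mu]$) and the far pieces, each handled by $\log|x-y|\le\log2+\log_+|x|+\log_+|y|$ and by $\log_+|y|\le\tfrac{\log\e}{\e^2}|y|^2$ on $B_\e^c$, giving
\[
I[\mu_\e]-I[\mu]\ \le\ \frac{8I^{\mathrm{near}}[\mu]+2\log(1+M_2)+3\log2+2\log\e}{\e^2}\!\int_{B_\e^c}\!|x|^2\,d\mu\ \le\ \tfrac1{32}\!\int_{B_\e^c}\!|x|^2\,d\mu
\]
once $\e\ge\e_1(I^{\mathrm{near}}[\mu],M_2)$, an explicit function increasing in both arguments. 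Adding the two displays, $J[\mu_\e]-J[\mu]\le-\tfrac1{32}\int_{B_\e^c}|x|^2\,d\mu\le-\tfrac{\e^2}{32}\mu(B_\e^c)<0$, which is (ii) with $\e_\ast(\mu,R_0)=\max\{8\sqrt{W^2+R_0^2},\,\e_1(I^{\mathrm{near}}[\mu],M_2),\,3\}$.

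For (iii) I only need $W$, $M_2$, $I^{\mathrm{near}}[\mu]$ bounded by $C,R_0$ when $0\le J[\mu_k]\le C$: since $d^2\ge0$, $I[\mu_k]\le C$; feeding $I[\mu_k]\ge-\log(2(1+M_{2,k}))$ and $M_{2,k}\le4W_k^2+2R_0^2$ into $W_k^2=J[\mu_k]-I[\mu_k]\le C+\log(2(1+M_{2,k}))$ bounds $M_{2,k}$ by some $M_2^\ast(C,R_0)$ (as $t\le 4C+2R_0^2+4\log(2(1+t))$ has bounded solution set), hence $W_k\le W^\ast$ and $I^{\mathrm{near}}[\mu_k]=I[\mu_k]-I^{\mathrm{far}}[\mu_k]\le C+\log(2(1+M_2^\ast))$; substituting into $\e_\ast$ yields $\e_k\le\e_0(C,R_0)$. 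The main obstacle I anticipate is the Wasserstein step of (ii): the naive bound $d(\mu_\e,\mu_0)\le m^{-1/2}d(\mu,\mu_0)+\sqrt2R_0$ is hopelessly lossy, so one must produce the honest competitor above and verify that the $O(1-m)$ cost of re-placing the inflated mass does not eat the gain — which succeeds precisely because that gain is of order $\int_{B_\e^c}|x|^2\,d\mu\ge\e^2(1-m)$, dominating every competing term (of size $O(1-m)$ or $O(\e^{-2}\int_{B_\e^c}|x|^2\,d\mu)$) for $\e$ large; the apparently dangerous near-diagonal part of the logarithmic energy is then dispatched by its sign, not by an estimate.
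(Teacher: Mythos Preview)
Your argument is correct and follows a genuinely different route from the paper's. The paper's key device is to absorb a quadratic confinement into the logarithmic kernel, setting $K_w(x,y)=\log\frac{1}{|x-y|}+c_0(|x|^2+|y|^2)$; since $K_w\to+\infty$ as $\max\{|x|,|y|\}\to\infty$, any mass outside $B_\e$ automatically contributes at least $T_0$ to the weighted energy $I_w$, and after recombining with the Wasserstein piece (handled by simply restricting the optimal plan to $B_\e\times\R^n$ and normalising) one obtains the multiplicative comparison $J[\mu]\ge(\mu(B_\e))^2J[\mu_\e]+(1-\mu(B_\e))\bigl[T_0-2c_0\int_{B_\e}|x|^2\,d\mu\bigr]$, closed by taking $T_0$ (hence $\e$) large. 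You instead keep the two terms of $J$ separate: for the logarithmic energy you use a near/far split, discarding the near cross and outer pieces by sign and controlling the far pieces via $\log|x-y|\le\log2+\log_+|x|+\log_+|y|$ and $\log|y|\le(\log\e)\e^{-2}|y|^2$ on $B_\e^c$; for the Wasserstein term you build an explicit competitor for $(\mu_\e,\mu_0)$ by gluing $\gamma_1$ with a repair step inside $B_{R_0}$ --- this extra care is in fact warranted, since the normalised restriction $\tfrac1m\gamma|_{B_\e\times\R^n}$ has second marginal $\tfrac1m\nu_1$, not $\mu_0$. One small remark: composing the two legs via gluing only gives $|x-z|^2\le2|x-y|^2+2|y-z|^2$, so the costs do not literally add; your displayed error $4(1-m)(W^2+R_0^2)$ already absorbs this factor, so the inequality stands, but the phrasing ``cost $\tfrac{1-m}{m}Q$ \dots\ then cost $\le 2R_0^2(1-m)$'' should be read with that caveat. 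The paper's $K_w$ trick is slicker and yields a clean multiplicative inequality; your route is more elementary and fully quantitative --- every threshold is explicit in $W$, $M_2$, $I^{\mathrm{near}}[\mu]$ --- and both reduce (iii) to the same observation that the threshold depends only on quantities controlled by $J[\mu]$ and $R_0$.
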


\begin{proof}
We split the proof into three steps:

{\bf Step 1: Second momentum estimate:}

Let $\e>0$ be fixed. By Theorem 1 \cite{Rachev} there is transference plan $\gamma\in \mathcal M(\R^2\times B_{R_0}) $  with marginals $\mu, \mu_0$
such that $d^2(\mu, \mu_0)=\frac12\iint |x-y|^2\gamma.$  
Set $\gamma_\e(x,y)=\frac1{\mu(B_\e)}\gamma\with( B_{\e}\times\R^n)$
then 
\[
 \iint \gamma_\e
= 
\frac1{\mu(B_\e)}\iint\limits_{B_{\e}\times B_{R_0}} \gamma
=  
\frac1{\mu(B_\e)}\int_{B_{\e}} \mu(x)=1.
\]
Moreover, the projections of $\gamma_\e$ are $\mu_\e=\frac1{\mu(B_\e)}\mu\with B_\e$ and $\mu_0$. 
Hence 
\begin{eqnarray*}
d^2(\mu, \mu_0)
&=&
\frac12\iint|x-y|^2\gamma\\
&=& 
\frac 12\iint\limits_{B_\e\times B_{R_0}}|x-y|^2\gamma+ \frac12\iint\limits_{B_\e^c\times B_{R_0}}|x-y|^2\gamma\\
&=&
 \mu(B_\e)\frac12\iint |x-y|^2\gamma_\e+ \frac12\iint\limits_{B_\e ^c \times B_{R_0}}|x-y|^2\gamma.\\
\end{eqnarray*}
Since $\gamma_\e$ has marginals $\mu_\e, \mu_0$ then 
$\frac12\iint|x-y|^2\gamma_\e\ge d^2(\mu_\e, \mu_0)$. Consequently, this in combination with the last 
inequality yields
\begin{eqnarray}\nonumber
d^2(\mu, \mu_0)
&\ge&
 \mu(B_\e)d^2(\mu_\e, \mu_0)+\frac12 \iint\limits_{B_\e^c \times B_{R_0}}|x|^2\left(1-\frac{|y|}{|x|}\right)^2\gamma(x, y)dydx\\\nonumber
 &\ge&
 \mu(B_\e)d^2(\mu_\e, \mu_0)+\frac12 \iint\limits_{B_\e^c\times B_{R_0}}\left[|x|^2\left(1-\frac{R_0}{\e}\right)^2\gamma(x, y)dy\right]dx\\
 &=&
  \mu(B_\e)d^2(\mu_\e, \mu_0)+2c_0
  \int_{B_\e^c}|x|^2\mu\label{eq:d-est}, 
\end{eqnarray}
where we denote 
\begin{equation}\label{eq:c0-const}
c_0:=\frac14 \left(1-\frac{R_0}{\e}\right)^2
\end{equation} 
provided that $\e>{R_0}$.  
From H\"older's inequality we have  that 

\begin{eqnarray*}
2d^2(\mu, \mu_0)
&=&
\int|x|^2d\mu-2\iint x\cdot yd\gamma+\int|y|^2d\mu_0\\
&\ge&
 \frac12\int|x|^2d\mu-7 \int|y|^2d\mu_0, \\
\end{eqnarray*}
hence it gives
\begin{equation}\label{eq:wass-mu}
 \int|x|^2d\mu\le 4d^2(\mu, \mu_0)+14\int|y|^2d\mu_0\le 14(d^2(\mu, \mu_0)
 +R_0^2).
\end{equation}

{\bf Step 2:  A bound  for the logarithmic term:}

Now we want to estimate the 
logarithmic term from below. To do so we denote   
$Q(x)=c_0|x|^2, w(x)=e^{-c_0|x|^2}$ and introduce the logarithmic  energy with quadratic 
potential
\begin{eqnarray}\nonumber
I_w[\mu]
&=&
\iint\log \frac1{|x-y|}d\mu(x)d\mu(y)+2\int Qd\mu\\\label{blyaaa}
&=&
\iint\log \frac1{|x-y|w(x)w(y)}d\mu(x)d\mu(y).\\\nonumber
\end{eqnarray}
It is convenient to introduce the notation $K_w(x, y)=\log \frac1{|x-y|w(x)w(y)}$, with this we have
\begin{eqnarray*}
I_w[\mu]
&=&
\iint\limits_{B_\e\times B_\e}K_w(x, y)d\mu(x)d\mu(y)+2\iint\limits_{B_\e\times B_\e^c}K_w(x,y)d\mu(x)d\mu(y)\\
&&
+\iint\limits_{B_\e^c\times B_\e^c}K_w(x, y)d\mu(x)d\mu(y).
\end{eqnarray*}
 Observe that 
\begin{eqnarray*}
e^
{K_w(x, y)}=
\frac{e^{c_0(|x|^2+|y|^2)}}{|x-y|}
\ge
\frac{e^{c_0(|x|^2+|y|^2)}}{|x|+|y|} 
\ge 
\frac12\left(\frac{e^{2c_0(|x|^2+|y|^2)}}{|x|^2+|y|^2}\right)^{\frac12}
\end{eqnarray*}
because $\frac12(|x|+|y|)\le \sqrt{|x|^2+|y|^2}\le |x|+|y|$. Therefore for every large 
constant $T_0>0$ there is $\e$ such that if 
$\max\{|x|, |y|\}\ge \e$ then $K_w(x, y)\ge T_0$.  This yields the following estimate for 
$I_w$
\begin{eqnarray*}
I_w[\mu]
&\ge &
(\mu(B_\e))^2\iint\limits_{B_\e\times B_\e}K_w(x, y)d\mu_\e d\mu_\e+
2T_0\iint\limits_{B_\e\times B_\e^c}d\mu(x)d\mu(y)
+
T_0 \iint\limits_{B_\e^c\times B_\e^c}d\mu(x)d\mu(y)\\
&=&
(\mu(B_\e))^2\iint\limits_{B_\e\times B_\e}K_w(x, y)d \mu_\e d\mu_\e
+
2T_0\mu(B_\e)(1-\mu(B_\e))+T_0(1-\mu(B_\e))^2\\
&=&
(\mu(B_\e))^2\iint\limits_{B_\e\times B_\e}K_w(x, y)d\mu_\e d\mu_\e
+
T_0(1-(\mu(B_\e))^2). 
\end{eqnarray*}
Thus after some simplification we get

\begin{eqnarray}\nonumber
I_w[\mu]
&\ge&
(\mu(B_\e))^2 I_w(\mu_\e)+T_0(1-(\mu(B_\e))^2)\\\label{eq:log-est}
&\stackrel{\eqref{blyaaa}}{=}&
(\mu(B_\e))^2 \left[\iint \log \frac1{|x-y|}d\mu_\e d\mu_\e+ 2\int Q\mu_\e\right]+T_0(1-(\mu(B_\e))^2).\\\nonumber
\end{eqnarray}

{\bf Step 3: Energy comparison in $B_\e$:}

Combining \eqref{eq:log-est} with \eqref{eq:d-est} we get 

\begin{eqnarray*}
J[\mu]
&=&
\iint\log \frac1{|x-y|}d\mu(x)d\mu(y)+d^2(\mu, \mu_0)\\
&\stackrel{\eqref{eq:d-est}}{\ge}&
\iint\log \frac1{|x-y|}d\mu(x)d\mu(y)
+
\mu(B_\e)d^2(\mu_\e, \mu_0)+2c_0\int_{\R^n\setminus B_\e}|x|^2\mu\\
&=&
I_w(\mu)-2c_0\int_{B_\e}|x|^2d\mu + \mu(B_\e)d^2(\mu_\e, \mu_0)\\
&\stackrel{\eqref{eq:log-est}}{\ge} &
(\mu(B_\e))^2 \left[\iint \log \frac1{|x-y|}d\mu_\e d\mu_\e+ 2\int Q\mu_\e\right]+T_0(1-(\mu(B_\e))^2)\\
&& -2c_0\int_{B_\e}|x|^2d\mu + \mu(B_\e)d^2(\mu_\e, \mu_0)\\
&\ge&
(\mu(B_\e))^2J[\mu_\e] + 2c_0(\mu(B_\e))^2\int |x|^2\mu_\e+T_0(1-(\mu(B_\e))^2)  -2c_0\int_{B_\e}|x|^2d\mu. \\
\end{eqnarray*}
The last three terms on the last line can be further estimated from below as follows  
\begin{eqnarray}\nonumber
J[\mu]-(\mu(B_\e))^2J[\mu_\e] 
&=&
T_0(1-(\mu(B_\e))^2) + 2c_0\mu(B_\e)\int_{B_\e} |x|^2d\mu -2c_0\int_{B_\e}|x|^2d\mu\\\nonumber
&=&
T_0(1-(\mu(B_\e))^2) - 2c_0(1-\mu(B_\e))\int_{B_\e} |x|^2d\mu \\\nonumber
&=&
(1-\mu(B_\e))\left[T_0(1+\mu(B_\e))-2c_0\int_{B_\e} |x|^2d\mu\right]\\\label{jenya}
&\ge&
(1-\mu(B_\e))\left[T_0 -2c_0\int_{B_\e} |x|^2d\mu\right].\\\nonumber
\end{eqnarray}
In particular from here and \eqref{grbr} we see that $J[\mu]>-\infty$ and hence $(i)$ follows.
Now if we choose 
\begin{equation}\label{eq:T0-def}
T_0>1+J[\mu] +28c_0(d^2(\mu, \mu_0)+R_0^2)
\end{equation}
then from \eqref{jenya} it follows that 
\begin{eqnarray*}
J[\mu]-(\mu(B_\e))^2J[\mu_\e] 
&>& 
(J[\mu]+1)(1-(\mu(B_\e))^2) \\
&& + \ (1-\mu(B_\e))\left[28c_0(d^2(\mu, \mu_0)+R_0^2)(1+\mu(B_\e))- 2c_0\int_{B_\e} |x|^2d\mu\right] \\
&\stackrel{\eqref{eq:wass-mu}}{\ge}&
 (J[\mu]+1)(1-(\mu(B_\e))^2).
\end{eqnarray*}
This implies $(\mu(B_\e))^2(J[\mu]-J[\mu_\e])>1-(\mu(B_\e))^2$,  hence it is enough to take the minimization over $\mathcal M (B_{\e})$.

It remains to check (iii). First we estimate 
\begin{eqnarray*}
1+J[\mu_{k}] +28c_0(d^2(\mu_{k}, \mu_0)+R_0^2)
&\le&
1+J[\mu_{k}]+28c_0(d^2(\mu_{k}, \mu_0)+R_0^2)\\
&\le&
 1+C+28c_0(C+R_0^2)\\
 &\stackrel{\eqref{eq:c0-const}}{\le} &1+C+7(C+R_0^2):=\hat C.
\end{eqnarray*}
From \eqref{eq:T0-def} it follows that 
$T_0$ can be chosen to be the same for every $\mu_k$, say $T_0>\hat C$,  satisfying $0\le J[\mu_k]\le C$ and the proof is complete. 
\end{proof}

Now we are ready to finish the proof of  Theorem A.
\begin{thm}
Let $\rho_0\in \M 2$ such that $\supp \rho_0\subset B_{R_0}$ for some $R_0>0$. 
Then there exists a minimizer $\rho\in \M 2$ of $J$. Moreover, the support of $\rho$ is bounded. 
\end{thm}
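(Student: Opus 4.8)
The strategy is the direct method of the calculus of variations, with Proposition \ref{main-prop} doing the heavy lifting of compactifying the problem. First I would fix a minimizing sequence $\{\rho_k\}\subset P_2(\R^2)$, i.e. $J[\rho_k]\to m:=\inf_{P_2} J$. By \eqref{grbr} and the presence of the Wasserstein term, $J\ge 0$ on compactly supported measures, and an approximation argument (mollify, using Remark \ref{rem:Fourier} that $\|\rho-\rho_k\|\to 0$ for the energy norm, together with continuity of $d$ under $L^1_{loc}$ convergence) shows $m\ge 0$ and is finite (e.g. testing with $\rho=\rho_0$ mollified). Hence we may assume $0\le J[\rho_k]\le C$ for all $k$.

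Next I invoke Proposition \ref{main-prop}(ii)--(iii): for each $k$ there is $\e_k$ so that replacing $\rho_k$ by its normalized restriction $(\rho_k)_{\e_k}$ to $B_{\e_k}$ does not increase $J$ (in fact strictly decreases it unless $\supp\rho_k\subset B_{\e_k}$ already), and by (iii) the radii satisfy $\e_k\le\e_0$ uniformly, with $\e_0=\e_0(C,R_0)$. Replacing $\rho_k$ by this restriction, we obtain a new minimizing sequence — still with $J\le C$, so part (iii) continues to apply and no circularity arises — all of whose members are supported in the fixed ball $\overline{B_{\e_0}}$. Now $P_2(\overline{B_{\e_0}})$ is compact (the Remark after the definition of $P_2$: $\Om$ compact $\Rightarrow P_2(\Om)$ compact), so after passing to a subsequence $\rho_k\to\rho$ narrowly with $\rho\in\mathcal M(\overline{B_{\e_0}})$, in particular $\supp\rho\subset\overline{B_{\e_0}}$ is bounded.

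It remains to check $J[\rho]\le m$, i.e. lower semicontinuity along this subsequence. For the Wasserstein term this is property 4) of $d$: $d^2(\rho,\rho_0)\le\liminf_k d^2(\rho_k,\rho_0)$ under weak convergence. For the logarithmic term, since all measures live in the fixed ball $B_{\e_0}$, the kernel $\log\frac1{|x-y|}$ is bounded below on $B_{\e_0}\times B_{\e_0}$ and lower semicontinuous (indeed $+\infty$ on the diagonal), so $\iint\log\frac1{|x-y|}d\rho\,d\rho\le\liminf_k\iint\log\frac1{|x-y|}d\rho_k\,d\rho_k$ by the standard lower semicontinuity of interaction energies with lower semicontinuous, lower bounded kernels under narrow convergence of the product measures $\rho_k\otimes\rho_k\to\rho\otimes\rho$. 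Adding the two gives $J[\rho]\le\liminf_k J[\rho_k]=m$, so $\rho$ is a minimizer; and $\rho$ inherits compact support from the construction.

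\textbf{Main obstacle.} The genuine difficulty is entirely front-loaded into obtaining the uniform support bound — i.e. Proposition \ref{main-prop}, already proved in the excerpt — since without it the sequence lives only in the non-locally-compact space $P_2(\R^2)$ and could leak mass to infinity. Given that proposition, the only point requiring a little care is ensuring the truncation step does not spoil the energy bound used to apply part (iii); this is handled by noting the truncated measures have $J\le J[\rho_k]\le C$, so the same $\e_0$ works throughout. The lower semicontinuity of the log-interaction is standard once the common compact support confines the kernel.
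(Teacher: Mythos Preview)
Your approach is essentially the paper's own: invoke Proposition~\ref{main-prop} to pass to a minimizing sequence with uniformly bounded supports in some $B_{\e_0}$, extract a weak limit by compactness, and conclude via lower semicontinuity of both the logarithmic energy (kernel bounded below and l.s.c.\ on the fixed compact) and the Wasserstein term (property~4).

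The one place where your write-up diverges slightly from the paper --- and where it wobbles --- is the justification of $m\ge 0$ (needed so that the hypothesis $0\le J[\rho_k]\le C$ of part~(iii) is available). Mollification does \emph{not} produce compactly supported measures, so \eqref{grbr} is not directly applicable to the mollified sequence, and the sketched approximation argument does not close as written. The paper avoids this detour entirely: it first applies part~(ii) to each $\mu_k$ (which requires only $J[\mu_k]<+\infty$) to obtain compactly supported restrictions $\mu_{k,\e_k}$ with $J[\mu_{k,\e_k}]<J[\mu_k]\le C$; for these, \eqref{grbr} gives $J[\mu_{k,\e_k}]\ge 0$ immediately, and part~(iii) is then applied to the sequence $\{\mu_{k,\e_k}\}$ to get the uniform radius bound. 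This ordering both establishes $m\ge 0$ a posteriori and furnishes the hypothesis of (iii) without any mollification. Once you reorder in this way your argument coincides with the paper's.
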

\begin{proof}
First note that if we take the uniform measure $\mu$ of some ball $B$ having positive distance from $B_{R_0}$
then $J[\mu]<+\infty$. Hence by Proposition \ref{main-prop} (i) we have that  $J[\mu]>-\infty$. 
Thus if $\mu_k\in P_2(\R^2)$ is a minimizing sequence then without loss of generality we can assume that 
$J[\mu_k]\le C$ for some $C>0$ uniformly in $k$. 
Moreover, from Proposition \ref{main-prop} (ii) it follows that there are positive numbers $\e_k>0$ such that 
for the restriction measures $\mu_{k, \e_k}$ we have 
\begin{equation}\label{sandwich}
J[\mu_{k, \e_k}]<J[\mu_k]\le C.
\end{equation}
On the other hand it follows from \eqref{grbr} that $J[\mu_{k, \e_k}]\ge 0$ because $\supp \mu_{k, \e_k}$
is compact. Thus 
$0\le J[\mu_{k, \e_k}]\le C$ uniformly in $k$ and moreover $J[\mu_{k, \e_k}]\to \inf_{\rho\in P_2(\R^2)} J[\rho]$
thanks to \eqref{sandwich}. Consequently, applying Proposition \ref{main-prop} (iii),  we can use the weak compactness of $\mu_{k, \e_k}$ in 
$\mathcal M(B_{\e_0})$ to get a weakly converging subsequence still denoted  $\mu_{k, \e_k}$ to some 
$\rho\in \mathcal M(B_{\e_0})$.  
The logarithmic term is  lower-semicontinuous \cite{ST}, hence from  the lower-semicontinuity of $d$ (see property 4) in Section \ref{sec:2}) it follows that 
\[
J[\rho]\le \liminf_{k\to\infty}J[\mu_{k, \e_k}]
\]
and the desired result follows.
\end{proof}

\section{Random Matrices}\label{sec:4}
In this section we discuss a problem related to random matrices which 
leads to the obstacle problem \eqref{eq:main-obs}. 
Let  $H$ be a Hermitian matrix, i.e. 
$H_{ij}^\dagger=\bar H_{ji}$ (or $H^\dagger=H$ for short) where $\bar H_{ij}$ are the  complex conjugates of the entries 
of $N\times N$ matrix $H$. 
One of the well known random matrix ensembles is the Gaussian ensemble. The 
probability density of the random variables in the Gauss ensemble is given by the formula
\begin{equation*}
    P(H\in E)=\int_E e^{-\kappa {\rm Trace}( H^2)}dH,
\end{equation*}
where $\kappa>0$ and  
\begin{equation*}
    \hbox{Trace}  H^2=\sum_{ij}|H_{ij}|^2
\end{equation*}
is the trace of the squared matrix \cite{Mehta}. 
The dispersion is the same for every $H$ in the ensemble.

The corresponding statistical sum is
\begin{equation*}
    Z_{N}=\int e^{-\kappa{\rm Trace}( H^{2})}dH.
\end{equation*}
Regarding $H$ as a vector in $\mathbb C^{N^2}$ it is easy to see that 
the volume element is 
\begin{equation*}
    dH=\prod_{i=1}^NdH_{ii}\prod_{j<k}d\left(\Re H_{jk}\right)d\left(\Im H_{jk}\right).
\end{equation*}
Diagonalizing the matrix we have 
\[
H=UXU^\dagger, \quad X=\hbox{diag}(x_1, x_2, \dots, x_N), 
\]
where $U$ is a real unitary matrix $UU^\dagger =Id$, determined modulo a multiplication of 
$U_{\rm{diag}}=\mbox{diag}(e^{i\theta_1}, e^{i\theta_2}, \dots , e^{i\theta_N})$, thus we 
 consider the change of variables 
\[\left\{ H_{ik},H_{ii}\right\} \rightarrow \left\{ u_{ab},x_{a}\right\} \]
then 
\[dH=\left| J\right| \prod _{a\neq b}du_{ab}\prod _{a}dx_{a}, \]
where the Jacobian of the transformation is 
\[
J=\frac {\partial \left( H_{jk},H_{ii}\right) }{\partial \left( u_{ab},x_{a}\right) }, 
\]
which after some change of variables and simplifications leads to 
\[
|J|=A(u)\prod_{i<k}(x_i-x_k)^2.
\]
Since the trace of $H^2$ is invariant then it follows that 
$\mbox{Trace}( H^2)=\sum x_i^2$ and therefore
\[
\begin{split}
P(x_1, \dots, x_N)dx_1\dots dx_N
&=
\frac1{Z_N}\prod_{i<k}(x_i-x_k)^2\prod_{i}e^{-\kappa x_i^2}dx_i, \\
Z_N
&=
C_N\int
\prod_{i<k} (x_i-x_k)^2 \prod_{i} e^{-\kappa x_i^2} dx_i, 
\end{split}
\]
and $C_N$ is some universal constant (the volume of the unitary group factorized with respect to the subgroup of diagonal matrices).
The statistical sum $Z_N$ can be rewriten  in an equivalent form 
\[Z_N=C_N\int
\prod_{i<k} (x_i-x_k)^2 \prod_{i} e^{-\kappa x_i^2} dx_i=\int e^{-W}dx_1\dots dx_N, \]
where 
\[W=-\sum_{i\not =j}\log|x_i-x_j|+\frac Ng\sum_{i}x_i^2\]
and we replaced $\kappa=Ng$ for convenience. 
If we assume that the particles (in the equilibrium) have 
density $\rho$ then from approximation of Riemann's sum
we get that 
\[W\sim -N^2\int\int\log|x-y|\rho(x)\rho(y)dxdy+ {N^2}g\int\rho(x)|x|^2.\]
As $N\to \infty$ the main contribution comes from the minimum of the functional 
\[
F[\rho]=\int\int\log|x-y|\rho(x)\rho(y)dxdy+g\int\rho(x)|x|^2
\]with respect to 
the constraint $\int_\R \rho=1$.

If in $W$ the quadratic term is replaced by ${-\frac12|x_i-y_i|^2\gamma(x_i, y_i)}, H_0=\mbox{diag}(y_1, \dots, y_N)$,  then 
we get the model corresponding to the energy $J$. 
\begin{rem}
Let $n=1$, then  the first variation of  $F[\rho]$ gives
\[
-2\int_{\R}\log|x-y|\rho(y)+{x^2}g=\lambda,
\]
where $\lambda$ is the Lagrange multiplier of the constraint 
$\int_\R\rho=1$.
Differentiating in $x$ we get 
\[
\hbox{P.V.}\int_{-\infty}^{+\infty} \frac{\rho(y)dy}{x-y}=xg.
\]
The solution of this equation (given in terms of Hilbert's transform) has the form 
\[\rho(x)=\left\{
\begin{array}{ll}
\frac1{\pi g}\sqrt{\frac 2g-x^2} &\mbox{if}\ |x|<\sqrt{\frac2g},\\
0 &\mbox{if}\ |x|>\sqrt{\frac2g}, 
\end{array}
\right.
\]
and this is  Wigner's  famous semicircle law \cite{Serfaty}.

For the problem with $d^2$ we have $2U^\rho+\frac12|x-T(x)|^2=\lambda$, 
where $T:x\to y$ is the transport map. Since by Theorem B $x-T(x)=-2\frac{dU^\rho}{dx}$,  it follows
that $U^\rho+\left|\frac d{dx} U^\rho\right|^2=\lambda/2$.
Hence $U^\rho\le \lambda/2$ on $\supp\rho$ and 
\[
\pm \frac d{dx} U^\rho=\sqrt{\lambda/2-U^\rho}
\]
or equivalently 
$
\pm 2 \sqrt{\lambda/2-U^\rho}=x+C,
$
where $C$ is an arbitrary constant. Thus after normalization we get that 

\[
2U^\rho=\lambda-\frac{x^2}2 \quad \mbox{on}\  \supp \rho.
\]
\end{rem}


\section{Euler-Lagrange equation}\label{sec:6}

\begin{defn}
We say that a set $S\subset \R^n\times \R^n$ is cyclically monotone if 
\begin{equation}\label{eq:510}
\sum ^{m}_{k=1}\left| x_{k}-y_{k}\right| ^{2}\leq \sum ^{m}_{k={1}}\left| x_{k+1}-y_{k}\right| ^{2}
\end{equation}
holds whenever $m\ge 2$ and $(x_i, y_i)\in S, 1\le i\le m$ with $x_{m+1}=x_1$. 
The set $x_1, x_2, \dots, x_n$ is called a cycle.
\end{defn}
Cancelling the square terms from \eqref{eq:510} we get

\begin{equation}\label{urbs}
\sum ^{m}_{1}y_{k}x_{k}\geq \sum ^{m}_{1}y_{k}x_{k+1 }. 
\end{equation}
Let $\gamma$ be a transference plane with marginals $\rho, \rho_0$.
It is well known that the support of  $\gamma$
is cyclically monotone, see  \cite{Ambrosio} Theorem 2.2. 

Let $S\subset \mathbb{R} ^{n}\times \mathbb{R} ^{n}$ be cyclically monotone.
Set $c\left( x,y\right) =\dfrac {1}{2}\left| x-y\right| ^{2}$
and introduce the function 
\begin{equation}\label{2-star}
\psi \left( x\right) 
=\sup_{(x_{i},y_i)\in S}\left\{c\left( x_{0},y_{0}\right) -c\left( x_{1},y_{0}\right) +c\left( x_{1},y_{1}\right) -c\left( x_{2},y_{1}\right) +\ldots +c\left( x_{k},y_{k}\right) -c\left( x,y_{k}\right) \right\}, 
\end{equation}
where the supremum is taken over all cycles of finite length. 
It is easy to check that  $\psi$ defined in \eqref{2-star} satisfies 
$
\psi \left( x\right) \leq 0
$ 
and the normalization condition
$
\psi \left( x_{0}\right) =0.
$

If $\gamma(x, y)$ is a transference plan then it is contained in the 
$c$ superdifferential of the $c$ concave function $\psi$ constructed above. 
$\psi$ is called the maximal Kantorovich potential. 
Moreover, we have that 
if $(x', y')\in \supp\gamma$ then for every $x\in \R^n$  
\begin{equation}\label{albats}
\psi \left( x\right) +\dfrac {1}{2}\left| x-y'\right| ^{2}\geq \psi \left( x'\right) +\dfrac {1}{2}\left| x'-y'\right| ^{2}.
\end{equation}
See \cite{Ambrosio} for proof.

\begin{rem}
Recall that by Corollary 2.2 \cite{Ambrosio} if (CC) graphs are $\rho$ negligible then 
the transference plan $\gamma$ is unique and the transport map $T=\na v$ for some convex  potential $v$. 
\end{rem}


We want to show that in \eqref{albats}  we can take $\psi=2U^\rho$, and 
$\rho$ is absolutely continuous with respect to the Lebesgue measure. 

\begin{lem}\label{lem:sully}
$U^\rho \rho$ is a signed Radon measure.
\end{lem}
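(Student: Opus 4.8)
The plan is to show that $U^\rho\rho$ has finite total variation by splitting the estimate according to the sign and size of the potential, and exploiting the fact that a minimizer has finite energy $J[\rho]<\infty$ (in particular $I[\rho]=\iint K(x-y)\,d\rho(x)\,d\rho(y)<\infty$ and $\supp\rho$ is compact by Theorem A). Since $U^\rho=\rho\ast K$ and $\supp\rho\subset B_{\e_0}$, the measure $U^\rho\rho$ is already compactly supported, so it suffices to bound $\int |U^\rho|\,d\rho$. First I would write
\[
\int |U^\rho|\,d\rho=\int (U^\rho)^+\,d\rho+\int (U^\rho)^-\,d\rho,
\]
and handle the two pieces separately.

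For the positive part, note that on $B_{\e_0}$ the kernel $K(x-y)=\log\frac1{|x-y|}$ is bounded above by $\log\frac1{|x-y|}\le \log\frac1{|x-y|}+\log(2\e_0)$ only where $|x-y|\le 1$; more precisely, $(U^\rho)^+$ is controlled by $\iint_{|x-y|\le1}\log\frac1{|x-y|}\,d\rho(y)$ plus a bounded term, because for $|x-y|\ge1$ the integrand is nonpositive. Integrating against $d\rho(x)$ and using that the double integral of $\log^+\frac1{|x-y|}$ is dominated by $I[\rho]$ up to an additive constant times $\rho(\R^2)^2=1$ (the part of $I[\rho]$ where the kernel is negative is itself bounded below on a bounded set), we get $\int (U^\rho)^+\,d\rho\le C(I[\rho]+1)<\infty$. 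The negative part is easier: on $B_{\e_0}$ we have $K(x-y)=\log\frac1{|x-y|}\ge -\log(2\e_0)$, so $U^\rho\ge -\log(2\e_0)$ on $\supp\rho$, whence $\int (U^\rho)^-\,d\rho\le \log(2\e_0)<\infty$. (In the Newtonian case $n\ge3$ the kernel is nonnegative, so $U^\rho\ge0$ and only the upper bound via $I[\rho]$ is needed.) Combining the two bounds gives $\int|U^\rho|\,d\rho<\infty$, so $U^\rho\rho$ is a finite signed measure; being a difference of two finite positive Borel measures with compact support on $\R^n$, it is a signed Radon measure.

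The main obstacle is the bound on $\int (U^\rho)^+\,d\rho$: one must be careful that the finiteness of $I[\rho]$ really does control $\iint\log^+\frac1{|x-y|}\,d\rho\,d\rho$ and not just the full (signed) double integral. The point is that $\iint\log\frac1{|x-y|}\,d\rho\,d\rho=\iint_{|x-y|\le1}\log\frac1{|x-y|}\,d\rho\,d\rho-\iint_{|x-y|>1}\log|x-y|\,d\rho\,d\rho$, and the second term is bounded in absolute value by $\log(2\e_0)$ since $|x-y|\le 2\e_0$ on $\supp\rho\times\supp\rho$; hence $\iint\log^+\frac1{|x-y|}\,d\rho\,d\rho=I[\rho]+\iint_{|x-y|>1}\log|x-y|\,d\rho\,d\rho\le I[\rho]+\log(2\e_0)<\infty$. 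Everything else is routine: absolute continuity of $\rho$ with respect to Lebesgue measure is not needed here, only compact support and finite logarithmic energy, both guaranteed for a minimizer.
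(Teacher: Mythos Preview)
Your proof is correct and takes a genuinely different route from the paper. The paper proceeds via mollification and integration by parts: it writes $\int_B U^\rho\xi\,\rho_k=-\frac{1}{2\pi}\int U^\rho\xi\,\Delta U^{\rho_k}$, applies Cauchy--Schwarz to bound this by $\|\nabla(U^\rho\xi)\|_{L^2}\|\nabla U^{\rho_k}\|_{L^2}$, controls $\|\nabla U^{\rho_k}\|_{L^2}^2$ through the Fourier representation $4\pi^2\int|\xi|^2|\widehat K|^2|\widehat\rho_k|^2\le C\,I[\rho_k]$, and then passes to the limit via Fatou using that $U^\rho$ is superharmonic (hence bounded below on $B$). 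This ties the lemma to the $H^1$ regularity of $U^\rho$ and to the Fourier machinery used throughout the paper. Your argument is more elementary: you exploit directly that on $\supp\rho\times\supp\rho$ the kernel is bounded below, giving $(U^\rho)^-\le\log(2\e_0)$, and that finiteness of $I[\rho]$ together with the trivial bound $\iint\log^+|x-y|\,d\rho\,d\rho\le\log(2\e_0)$ forces $\iint\log^+\tfrac{1}{|x-y|}\,d\rho\,d\rho<\infty$, which in turn dominates $\int(U^\rho)^+\,d\rho$ pointwise. Your approach avoids Fourier transforms, mollification, and the $H^1$ input entirely, using only compact support and finite logarithmic energy; the paper's approach, while heavier, foreshadows the $L^2$/Fourier estimates that drive the subsequent claim $\widehat\rho\in L^2$ in Theorem~\ref{thm:mon}.
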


\begin{proof}
Let $\xi\in C_0^\infty(B)$
be a cut-off function of some ball $B$. Let $\{\rho_k\}_{k=1}^\infty$ be a sequence of 
mollifications of $\rho$. 
Recall that $I[\rho_k]<\infty$ and $\|\rho-\rho_k\|=I[\rho-\rho_k]\to 0$ as $k\to \infty$, see Remark \ref{rem:Fourier}. 
Thus 
\begin{eqnarray}
\int_B U^\rho\xi \rho_k
&=&
-\frac1{2\pi}\int_B U^\rho\xi \Delta U^{\rho_k}\\\nonumber
&=&
\frac1{2\pi}\int \na (U^\rho\xi)\na U^{\rho_k}\le \|\na(U^\rho\xi )\|_{L^2}^2\|\na U^{\rho_k}\|_{L^2}^2.
\end{eqnarray}
Note that \cite{Landkof} Lamma 1.$2'$ page 83
\begin{eqnarray*}
\|\na U^{\rho_k}\|_{L^2}^2
&=&
4\pi^2\int|\xi|^2|\widehat K|^2|\widehat \rho_k|^2 \\
&\stackrel{\eqref{eq:K-Fourier}}{\le} &
4\pi^2c_1\int \widehat K |\widehat \rho_k|^2
=
4\pi^2c_1 I[\rho_k]\\
&\le& 
8\pi^2c_1I[\rho]+8\pi^2c_1 I[\rho-\rho_k]
\to 
8\pi^2c_1 I[\rho]
\end{eqnarray*}
as $k\to \infty$. 
Since $U^\rho\in H^1$ (see \cite{K17}) is superharmonic (hence bounded below in $B$) then from Fatou's lemma we get that  
\[
 \int U^\rho\xi d\rho \le \lim _{k\to \infty}\int_B U^\rho\xi \rho_k\le C\|U^\rho\|_{H^1(B)}^2 I[\rho], 
\]
where $C$ depends only on the dimension. 
\end{proof}

\begin{thm}\label{thm:mon}
Suppose the infimum in $d(\rho, \rho_0)$ is realized for a transference plan 
$\gamma$ and $(x^\ast, y^\ast)\in \supp \gamma$. Then $\rho$ has $L^\infty$ density with respect to the Lebesgue
measure,  and for every $x_0$ we have 
\begin{equation}\label{eq:mon}
\frac12|x_0-y^\ast|^2-\frac12|x^\ast-y^\ast|^2+2U^\rho(x_0)-2U^\rho(x^\ast)\ge 0.
\end{equation}
Moreover, $\na U^\rho$ is log-Lipschitz continuous. 
\end{thm}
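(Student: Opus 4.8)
The plan is to derive the Euler--Lagrange inequality by first-variation along two complementary families of competitors, and then to extract the $L^\infty$ bound and the log-Lipschitz regularity from it. Throughout, I will use that by Theorem A the minimizer $\rho$ has compact support and no atoms, so an optimal transport plan $\gamma$ with marginals $\rho,\rho_0$ exists, its support is cyclically monotone, and the maximal Kantorovich potential $\psi$ of $\rho\to\rho_0$ satisfies \eqref{albats}.

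\emph{Step 1: first variation of $J$.} Fix $(x^\ast,y^\ast)\in\supp\gamma$ and a target point $x_0$. I would move an infinitesimal amount of mass $\e$ of $\rho$ from a small ball around $x^\ast$ to a small ball around $x_0$, obtaining a competitor $\rho_\e\in\M2$, and modify $\gamma$ correspondingly so that the transported mass now travels from $x_0$ to $y^\ast$; letting the radii of the two balls shrink after $\e\to0$, one computes
\[
\frac{d}{d\e}\Big|_{\e=0^+}J[\rho_\e]
=2\big(U^\rho(x_0)-U^\rho(x^\ast)\big)+\tfrac12|x_0-y^\ast|^2-\tfrac12|x^\ast-y^\ast|^2\,,
\]
the factor $2$ on the potential coming from the symmetry of the bilinear logarithmic form (each unit of moved mass interacts with all of $\rho$ twice). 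Minimality forces this derivative to be $\ge0$, which is exactly \eqref{eq:mon}. The delicate point here is that $U^\rho$ is only defined up to its singularities, so to make the variation rigorous I would replace the point masses by normalized restrictions of $\rho$ to shrinking balls and use Lemma \ref{lem:sully}, which guarantees $U^\rho\rho$ is a signed Radon measure, to pass to the limit in the nonlocal term; the $d^2$ term is handled by the plan-modification estimate together with the lower semicontinuity property (4) of $d$.

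\emph{Step 2: $L^\infty$ density.} Comparing \eqref{eq:mon} with \eqref{albats} shows that $\psi(x):=2U^\rho(x)$ can be taken as a Kantorovich potential, i.e. $U^\rho=\psi/2$ on $\supp\rho$ and $U^\rho\ge\psi/2$ off it; in particular $2U^\rho(x)+\tfrac12|x-y^\ast|^2$ is minimized over $x$ at $x^\ast$. Since $x\mapsto -\tfrac12|x-y^\ast|^2 + 2U^\rho(x^\ast)+\tfrac12|x^\ast-y^\ast|^2$ lies below $2U^\rho$ with equality at $x^\ast$, the function $2U^\rho$ is touched from below at a.e.\ point of $\supp\rho$ by a paraboloid of fixed opening $-1$; hence $2U^\rho+\tfrac12|x|^2$ is touched from below by an affine function, i.e.\ it is subharmonic-like at those points, giving $\Delta(2U^\rho)\ge -n$ in the support in the viscosity/distributional sense. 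Combined with $\Delta U^\rho=-2\pi\rho\le 0$ this pins down $0\le 2\pi\rho\le n/2$ a.e.\ on $\supp\rho$, so $d\rho=f\,dx$ with $f\in L^\infty$. (In fact this is the observation that the obstacle $\psi$ is semiconvex with $D^2\psi\ge -\mathrm{Id}$, which via $\Delta U^\rho=\Delta\psi$ on $\{\rho>0\}$ bounds $\rho$.)

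\emph{Step 3: log-Lipschitz regularity of $\na U^\rho$.} Once $\rho\in L^\infty$ with compact support, $\Delta U^\rho=-2\pi\rho\in L^\infty_c$, and the classical estimate for the Newtonian/logarithmic potential of a bounded compactly supported density — Yudovich's lemma \cite{Yudovich} — gives that $\na U^\rho$ is log-Lipschitz, i.e.\ $|\na U^\rho(x)-\na U^\rho(z)|\le C\,|x-z|\log\frac{1}{|x-z|}$ for $|x-z|$ small; the same input yields $D^2U^\rho\in L^p_{loc}$ for all $p<\infty$ by Calder\'on--Zygmund. The main obstacle is Step 1: making the two-point first-variation rigorous despite the sign-changing and singular potential $U^\rho$, which is why the Radon-measure property of $U^\rho\rho$ from Lemma \ref{lem:sully} and the strong approximation of $\rho$ by mollifications in the energy norm (Remark \ref{rem:Fourier}) are doing the real work; everything after that is standard potential theory.
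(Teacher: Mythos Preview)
Your Step 1 and Step 3 align with the paper: the same translate-and-compare competitor $\gamma_\e = \tau_\#\gamma^\ast_\e$ is used for the first variation, and Yudovich's theorem is invoked for log-Lipschitz regularity once $\rho\in L^\infty$. The substantive divergence is in how the $L^\infty$ bound is extracted.

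The paper does \emph{not} go from the pointwise inequality \eqref{eq:mon} to the density bound. Instead it keeps the first variation in integrated form \eqref{sully}, symmetrizes with $x_0 - x^\ast = \pm h e_j$ to produce the second-difference inequality \eqref{sully2}, and then uses the Fourier representation $\widehat{K_{r_0}} = c_1(1-\mathcal B(2\pi r_0|\xi|))/4\pi|\xi|^2$ together with Parseval and Fatou to show $\widehat\rho\in L^2$, hence $\rho\in L^2$. Only \emph{after} $\rho\in L^2$ does the paper pass to the limit $h\to 0$ in the localized version of \eqref{sully2} (weak convergence of second differences in $L^2$) to obtain $2\pi\int\rho^2\xi\le\int\rho\xi$, and hence the upper Lebesgue density bound. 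The pointwise inequality \eqref{eq:mon} is derived \emph{last}, after $\rho\in L^\infty$ is in hand and $U^\rho$ is continuous, via the ``mean value theorem'' passage at the end of the proof.

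Your route inverts this order, and that is where the gap lies. In Step 1 you let the balls shrink to obtain \eqref{eq:mon} \emph{pointwise}; but the averages $\int U^\rho\,d\varphi^\ast$ are taken against the (a priori possibly singular) measure $\rho\with B_\e(x^\ast)$, and you have not explained why these converge to $U^\rho(x^\ast)$ --- lower semicontinuity of the superharmonic $U^\rho$ points the wrong way for the sign you need. Then in Step 2 you invoke a ``viscosity/distributional'' Laplacian bound from paraboloid touching, but the touching only occurs on $\supp\rho$, and $U^\rho$ is only l.s.c.\ at this stage; the jump to $\Delta(2U^\rho)\ge -n$ as a distribution is not justified. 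Your idea is morally correct and can be repaired --- e.g.\ by averaging \eqref{eq:mon} over $x_0\in B_r(x^\ast)$ and using the mean-value deficit formula for superharmonic functions to bound the upper density of $\rho$ at each $x^\ast$ directly --- but as written the argument is circular: it uses regularity of $U^\rho$ to derive the inequality that is supposed to yield that regularity. The Fourier detour in the paper is precisely what breaks this circle.
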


\smallskip
\begin{proof}
Let $\xi(x)$ be a cut-off function on $B_\e(x^\ast)$. 
Introduce 
\[
\gamma^\ast_\e(x, y)=\xi(x)\gamma(x, y)\with B_\e(x^\ast)\times B_\e(y^\ast).
\]
Note that $\gamma^\ast_\e(x, y)$ is not a probability measure.
Let $\gamma_\e(x, y)=\tau_\#\gamma^\ast_\e(x, y)$, 
where $\tau: (x, y)\to (x-x^\ast+x_0, y)$ is the translation operator in $x$ so that 
$\tau(x^\ast, y)=(x_0, y)$, see Figure \ref{fig1}. 
We see that the $x$ marginals are 

\begin{eqnarray}\label{eq:phi-star}
\varphi^\ast(x)&:=&\pi_{x\#}\gamma^\ast_\e(x,y),\\\nonumber
\varphi_0(x)&:=&\pi_{x\#}\gamma_\e(x, y).
\end{eqnarray}
Now we check the marginals in $y$

\[
\int _{}\left[ \gamma \left( x,y\right) -t\gamma ^{\ast }_{\varepsilon }\left( x,y\right) +t\gamma _{\varepsilon }\left( x,y\right) \right] dx=\int \gamma \left( x,y\right) dx=\rho_{0}\left( y\right) 
\]
because $T$ is measure preserving, and for the other marginal 
\[
\int
\left[ \gamma \left( x,y\right) -t\gamma ^{\ast }_{\varepsilon }\left( x,y\right) +t\gamma _{\varepsilon }\left( x,y\right) \right] dy=\rho\left( x\right) -t\varphi ^{\ast }(x)+t\varphi _{0}\left( x\right) .
\]
Observe that by \eqref{eq:phi-star} and the definition of $\gamma_\e^\ast$ we have 
\begin{eqnarray*}
\rho(x)-t\varphi^\ast(x)
&=&
\int _{}\left[ \gamma \left( x,y\right) -t\gamma ^{\ast }_{\varepsilon }\left( x,y\right)\right]dy\\
&=&
\int _{}\gamma(x, y)\left[1-t\xi(x)1_{B_\e(x^\ast)\times B_\e(y^\ast)}\right]dy\\
&\ge&
 0
\end{eqnarray*}
provided that $t$ is small enough. 

\medskip 
Consequently we can use $\rho-t\varphi^\ast+t\varphi_0$ against $\rho$ and get from the convexity of $d^2$  (see Section \ref{sec:2})
the following estimate 
\[
\begin{split}
d^{2}\left( \rho_{0},\rho-t\varphi ^{\ast }+t\varphi _{0}\right) 
&\leq 
\dfrac {1}{2}\iint \left| x-y\right| ^{2}d\left( \gamma {-t}\gamma ^{\ast }_{\varepsilon }+t\gamma _{\varepsilon }\right)\\ 
&=
d^{2}\left( \rho_{0},\rho\right) +\dfrac {t}{2}\iint \left| x-y\right| ^{2}d(\gamma_{\varepsilon }-\gamma ^{\ast }_{\varepsilon }).
\end{split}
\]
For the nonlocal term we have 
\[
\begin{split}
\iint K\left( x-y\right) d\left( \rho+t\left( \varphi _{0}-\varphi^\ast \right) \right) d\left( \rho+t\left( \varphi _{0}-\varphi^\ast \right) \right)
&=
\iint K(x-y)d\rho(x) d\rho(y)\\
&+2t\int U^{\rho }\left( x\right) d\left( \varphi _{0}-\varphi^\ast \right) +O\left( t^{2}\right). 
\end{split}
\]

\begin{figure}
\begin{center}

\begin{tikzpicture}
 \draw[gray, thick, -latex] (-1, 0) -- (6, 0) node[right]{$x$}; 
  \draw[gray, thick, -latex] (0, -1) -- (0, 3) node[left]{$y$}; 
  \draw[dashed] (0, 2)--(4.5, 2);
    \draw[dashed] (1.5, 0)--(1.5, 2);
    \draw[dashed] (4.5, 0)--(4.5, 2);

  \filldraw[black] (0,2) circle (2pt) node[left] {$y^\ast$}; 
  \filldraw[black] (1.5,0) circle (2pt) node[anchor=north] {$x^\ast$};
  \filldraw[black] (4.5,0) circle (2pt) node[anchor=north] {$x_0$};
  \filldraw[black] (1.5,2) circle (2pt);
  \filldraw[black] (4.5,2) circle (2pt);

\draw[pattern=north west lines, pattern color=red] (1,1.5) rectangle (2,2.5) node[above] {$\gamma^\ast_\e$};
\draw[pattern=north west lines, pattern color=blue!40] (4,1.5) rectangle (5,2.5) node[above] {$\gamma_\e=\tau_{\#}\gamma_\e^\ast$};

            \draw[-latex] (1.5, 2) to [bend left=-45] (4.5, 2) ;
            \node at (3, 1.1){$T$};

\end{tikzpicture}

\end{center}

\caption{The geometric construction of joint measures $\gamma_\e$ and $\gamma_\e^\ast$ via restriction and translation.  }
\label{fig1}
\end{figure}
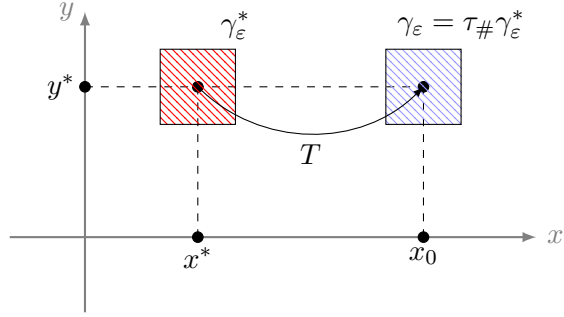

Then the energy comparison yields 
\[
\dfrac {t}{2}\iint \left| x-y\right| ^{2}d(\gamma_{\varepsilon }-\gamma ^{\ast }_{\varepsilon })
+2t\int U^{\rho }\left( x\right) d\left( \varphi _{0}-\varphi^\ast \right) +O\left( t^{2}\right)\ge 0.
\]
Sending $t\to 0$,  $t>0$ we get that 

\begin{equation}\label{eq:first-vat-glob}
\dfrac {1}{2}\iint  \left| x-y\right| ^{2}d(\gamma_{\varepsilon }-\gamma ^{\ast }_{\varepsilon })
+2\int U^{\rho }\left( x\right) d\left( \varphi _{0}-\varphi^\ast \right) \ge 0.
\end{equation}
Since $\gamma_\e$ is the push forward of $\gamma_\e^\ast$ under translation $x\to x-x^\ast+x_0$ then 
we have from \eqref{eq:first-vat-glob}
\begin{equation}\label{sully}
\dfrac {1}{2}\iint  \left[\left| x+x^\ast-x_0-y\right| ^{2}-|x-y|^2\right]d\gamma ^{\ast }_{\varepsilon }
+2\int \left[U^{\rho }\left( x+x^\ast-x_0\right)-U^\rho(x) \right]d\varphi^\ast \ge 0.
\end{equation}
Taking $x^\ast-x_0=\pm he_j$,  where $e_j$ is the unit direction of the $j$th coordinate axis, $h>0$,  and adding the resulted inequalities \eqref{sully}
we get 
\begin{eqnarray}\label{eq:sully3}
\dfrac {1}{2}\iint  \left[\left| x+he_j-y\right| ^{2}+|x-he_j-y|^2-2|x-y|^2\right]d\gamma ^{\ast }_{\varepsilon }\\\nonumber
+2\int \left[U^{\rho }\left( x+he_j\right)+U^{\rho }\left( x-he_j\right)-2U^\rho(x) \right]d\varphi^\ast \ge 0.
\end{eqnarray}
But $\left| x+he_j-y\right| ^{2}+|x-he_j-y|^2-2|x-y|^2=2h^2$,  hence \eqref{eq:sully3} is equivalent to
\begin{equation}\label{sully2}
-\int\delta_h U^\rho\xi d \rho
\le 
\frac12\int\xi d\rho.
\end{equation}
Note that by Lemma \ref{lem:sully} the left hand side of \eqref{sully2} is well defined.
 
\begin{claim}
$\rho$ has $L^2$ density.
\end{claim}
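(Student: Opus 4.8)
The plan is to read the Euler--Lagrange inequality \eqref{sully2} on the Fourier side, via the identity of Remark \ref{rem:Fourier}, and then let $h\to0$; the finite energy of $\rho$ (guaranteed by $J[\rho]<\infty$) and the positivity $\widehat K\ge0$ are what make this work. First I would note that the competitor $\rho-t\varphi^\ast+t\varphi_0$ used to obtain \eqref{sully2} is admissible for every small $t>0$, with no constraint forcing $\e$ to be small; choosing $\e$ so large that $\supp\rho\subset B_\e(x^\ast)$, $\supp\rho_0\subset B_\e(y^\ast)$ and $\xi\equiv1$ on $\supp\rho$, inequality \eqref{sully2} becomes, for every coordinate direction $e_j$ and every small $h>0$,
\[
-\int\delta^{(j)}_h U^\rho\,d\rho\le\tfrac12,\qquad \delta^{(j)}_h U^\rho(x):=\frac{U^\rho(x+he_j)+U^\rho(x-he_j)-2U^\rho(x)}{h^2},
\]
the left-hand side being finite and well defined by Lemma \ref{lem:sully}.

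Since $U^\rho=K\ast\rho$, the left-hand side equals $-h^{-2}\iint\bigl(K(x-y+he_j)+K(x-y-he_j)-2K(x-y)\bigr)\,d\rho(x)\,d\rho(y)$. Writing $\rho^{\pm}$ for the translates of $\rho$ by $\pm\tfrac h2 e_j$ and setting $\nu^{(j)}_h:=\rho^{+}-\rho^{-}$, a compactly supported signed measure with $\nu^{(j)}_h(\R^n)=0$, a short expansion of the mutual energies $\mathcal I[\rho^{\pm},\rho^{\pm}]$ identifies this double integral with $-\iint K(x-y)\,d\nu^{(j)}_h(x)\,d\nu^{(j)}_h(y)$; the weak Parseval identity \eqref{grbr}, in its form for compactly supported finite signed measures, then gives
\[
-\int\delta^{(j)}_h U^\rho\,d\rho=\frac1{h^2}\int\widehat K(\xi)\,|\widehat{\nu^{(j)}_h}(\xi)|^2\,d\xi=\frac{c_1}{\pi h^2}\int\frac{\sin^2(\pi h\xi_j)}{|\xi|^2}\,|\widehat\rho(\xi)|^2\,d\xi\ \ge0,
\]
because $\widehat{\nu^{(j)}_h}(\xi)=-2i\sin(\pi h\xi_j)\widehat\rho(\xi)$ and $\widehat K(\xi)=c_1/(4\pi|\xi|^2)$ (the second-order vanishing of $|\widehat{\nu^{(j)}_h}(\xi)|^2$ at $\xi=0$ regularizes the singularity of $\widehat K$ and kills any atomic part, so this is a genuine convergent integral). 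Summing over $j=1,\dots,n$ and using the previous bound,
\[
\frac{c_1}{\pi}\int\frac{\sum_{j=1}^n\sin^2(\pi h\xi_j)}{h^2|\xi|^2}\,|\widehat\rho(\xi)|^2\,d\xi\le\frac n2\qquad\text{for every small }h>0.
\]

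The integrand is nonnegative and converges pointwise to $\pi^2|\widehat\rho(\xi)|^2$ as $h\to0$ (since $h^{-2}\sum_j\sin^2(\pi h\xi_j)\to\pi^2|\xi|^2$), so Fatou's lemma yields $\pi c_1\int|\widehat\rho|^2\le n/2$; hence $\widehat\rho\in L^2(\R^n)$, and by Plancherel $\rho$ is absolutely continuous with $L^2$ density, with $\|\rho\|_{L^2}^2=\|\widehat\rho\|_{L^2}^2\le n/(2\pi c_1)$. This is the claim.

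The one step that needs care is the Parseval identity for the second-difference kernel together with the two interchanges of limits. All the mutual energies above are finite because $\rho$, and hence each translate $\rho^{\pm}$, has finite energy; and the safest rigorous route is to first replace $K$ by the truncated kernel $K_{r_0}=1_{B_{r_0}}K$, which coincides with $K$ on $\supp\rho-\supp\rho$ once $r_0$ is large enough and $h$ is small, carry out the computation with the genuine bounded function $\widehat{K_{r_0}}$ from \eqref{eq:K-Fourier}, and only afterwards let $r_0\to\infty$, using $\mathcal B\le1$ (so that $1-\mathcal B\ge0$) and $\mathcal B(t)\to0$ to pass that limit by another application of Fatou.
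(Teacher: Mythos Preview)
Your argument is correct and follows essentially the same route as the paper: globalize \eqref{sully2} by taking $\xi\equiv1$ on $\supp\rho$, pass to the Fourier side using the truncated kernel and the weak Parseval identity, and send $h\to0$ and then $r_0\to\infty$ via Fatou to obtain $\widehat\rho\in L^2$. Your repackaging of the second-difference integral as the energy $\mathcal I[\nu_h^{(j)},\nu_h^{(j)}]$ of the signed measure $\nu_h^{(j)}=\rho^{+}-\rho^{-}$ is a neat way to invoke \eqref{grbr} directly, but it leads to the identical Fourier expression $\int h^{-2}\sin^2(\pi h\xi_j)\,\widehat K_{r_0}\,|\widehat\rho|^2$ that the paper computes by transforming $\delta_hU^\rho$ and pairing with $\widehat\rho$.
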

\begin{proof}
Let $\delta_hu=\delta(x, h, u)=\frac1{h^2}\sum_j(u(x+he_j)+u(x-he_j)-2u(x))$ be the
discrete Laplacian. Then from 
\eqref{sully2} with $\xi=1$ on $B_{\e_0}$  and recalling that $\rho$ has compact support,  it follows that 
\[
-\int \delta_h(U^\rho) d\rho\le \frac12\int d\rho=\frac12.
\]
Since $\supp\rho$ is compact we can assume that $K$ vanishes 
outside of $B_{r_0}$ and consider the truncated kernel 
$K_{r_0}=1_{B_{r_0}}K$. From the weak Parseval identity we get that 
\begin{eqnarray*}
\frac12
&\ge& 
-\int\widehat{ \delta_h U^\rho}\widehat{\overline \rho}
=
 -\int \frac1{h^2}\sum_j\left[e^{-2\pi i h\xi_j}+e^{2\pi ih\xi_j}-2\right]\widehat {U^\rho}\widehat{\overline \rho}\\
&=&
 -\int \frac1{h^2}\sum_j\left[e^{-2\pi i h\xi_j}+e^{2\pi i h\xi_j}-2\right]\widehat K_{r_0} |\widehat{ \rho}|^2\\
& =&
\frac1{h^2}\int \widehat K_{r_0} |\widehat{ \rho}|^2\sum_j2(1-\cos2\pi h\xi_j)
 =
 4\int \sum_j\frac{\sin^2(\pi\xi_j h)}{h^2}\widehat K_{r_0} |\widehat{ \rho}|^2.
\end{eqnarray*}
Letting $h\to 0$ and applying Fatou's lemma we get 
\begin{eqnarray*}
\frac12
&\ge&
  4\pi^2\int |\xi|^2\widehat K_{r_0} |\widehat{ \rho}|^2 \stackrel{\eqref{eq:K-Fourier}}{=} 4\pi^2c_1\int(1-\mathcal B(2\pi r_0|\xi|)) |\widehat{ \rho}|^2.
\end{eqnarray*}
Since the left hand side of the previous inequality does not depend on 
$r_0$ we can let $r_0\to \infty$ and applying Fatou's lemma again we see that 
\[
4\pi^2 c_1\int|\widehat{ \rho}|^2
\le
\frac12\int d\rho. 
\] 
Since Fourier transform is isometry on $L^2$ then $\tilde \rho$, the inverse Fourier transform of $\widehat \rho$,
exists and $\tilde \rho\in L^2$. But then  $\widehat{(\rho-\tilde \rho)}=0$,  and  it follows that 
$\rho $ has $L^2$ density. The proof of the claim is complete. 
\end{proof}

Returning to the localized inequality \eqref{sully2} with $(x^\ast, y^\ast)\in \supp\gamma$ we get
\[
-\int \delta_h(U^\rho) \xi\rho dx\le \int\xi\rho dx.
\] 
Using the weak convergence of second order finite differences in $L^2$ we finally obatin
\[
2\pi \int_{B_\e(x^\ast)}\rho^2\xi dx\le \int_{B_\e(x^\ast)}\rho\xi dx\le \left(\int_{B_\e(x^\ast)}\rho^2\xi dx\right)^{\frac12}\left(\int_{B_\e(x^\ast)}\xi dx\right)^{\frac12}.
\]
Consequently, the upper Lebesgue density  of the measure 
$\rho$ is bounded by some universal constant and hence $d\rho=fdx$ for some 
$f\in L^\infty(\R^n)$ \cite{EG}. 
Therefore from Judovi\v{c}'s theorem \cite{Yudovich } $\na U^\rho$ is log-Lipschitz continuous. 
Moreover, by construction 
\[
\int \varphi_0(x)=\int\varphi^\ast (x)=\iint \gamma_\e=\iint \gamma_\e^\ast.
\]
Hence from \eqref{eq:first-vat-glob} and the mean value theorem  we get that 
\[
\frac12|x_0-y^\ast|^2-\frac12|x^\ast-y^\ast|^2+2U^\rho(x_0)-2U^\rho(x^\ast)\ge 0.
\]
Thus 
$2U^\rho(x_0)+\frac12|x_0-y^\ast|^2\ge 2U^\rho(x^\ast)+ \frac12|x^\ast-y^\ast|^2$.
\end{proof}

\begin{cor}\label{cor:pizza}
Let $\rho$ be a minimizer of $J$, then $U^\rho=\psi$  on $\supp\rho$.
Furthermore, $\supp \rho$ has nonempty inetrior.
\end{cor}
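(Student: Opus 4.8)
The plan is to combine the pointwise inequality \eqref{eq:mon} from Theorem \ref{thm:mon} with the maximality property \eqref{albats} of the Kantorovich potential $\psi$, playing the two off against each other on $\supp\rho$. First I would recall that, by Theorem A, the minimizer $\rho$ has no atoms, so the optimal plan $\gamma$ is induced by a transport map $T$; by the construction in Section \ref{sec:6}, $\psi$ is the maximal Kantorovich potential, $c$-concave, normalized by $\psi(x_0)=0$, and it satisfies \eqref{albats}: for every $(x',y')\in\supp\gamma$ and every $x\in\R^n$,
\[
\psi(x)+\tfrac12|x-y'|^2\ge \psi(x')+\tfrac12|x'-y'|^2 .
\]
On the other hand Theorem \ref{thm:mon} gives, for every $(x^\ast,y^\ast)\in\supp\gamma$ and every $x_0\in\R^n$,
\[
2U^\rho(x_0)+\tfrac12|x_0-y^\ast|^2\ \ge\ 2U^\rho(x^\ast)+\tfrac12|x^\ast-y^\ast|^2 .
\]
This says exactly that the function $w:=2U^\rho$ is itself a competitor in the supremum defining $\psi$ whenever we reindex the cycle through points of $\supp\gamma$; more precisely, running the telescoping sum in \eqref{2-star} along a chain of points $(x_i,y_i)\in\supp\gamma$ and using the displayed inequality at each link shows $w(x)-w(x_0)\ge$ (the telescoped $c$-difference), hence $w-w(x_0)\ge \psi$ on all of $\R^n$ after the normalization. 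Thus $2U^\rho\ge \psi + \mathrm{const}$, and by adjusting $U^\rho$ by an additive constant (the potential is defined only up to its behaviour at infinity for the logarithmic kernel, but more simply we just track the constant) we may write $2U^\rho\ge\psi$ everywhere, with the convention that both are normalized at $x_0$.

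Next I would prove the reverse inequality on $\supp\rho$. Fix $x^\ast\in\supp\rho$ and let $y^\ast=T(x^\ast)$, so $(x^\ast,y^\ast)\in\supp\gamma$. Apply \eqref{albats} with $(x',y')=(x^\ast,y^\ast)$ and an arbitrary $x$: $\psi(x)+\tfrac12|x-y^\ast|^2\ge \psi(x^\ast)+\tfrac12|x^\ast-y^\ast|^2$. Apply \eqref{eq:mon} with the same $(x^\ast,y^\ast)$ and $x_0=x$: $2U^\rho(x)+\tfrac12|x-y^\ast|^2\ge 2U^\rho(x^\ast)+\tfrac12|x^\ast-y^\ast|^2$. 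Subtracting the common quadratic terms, the first inequality says $\psi(x)-\psi(x^\ast)\ge \tfrac12|x^\ast-y^\ast|^2-\tfrac12|x-y^\ast|^2$ and the second says the same lower bound for $2U^\rho(x)-2U^\rho(x^\ast)$; combined with the global inequality $2U^\rho\ge\psi$ proved above, and evaluating at a point where equality must propagate, one gets that $2U^\rho$ and $\psi$ have the same $c$-superdifferential at $x^\ast$. Since $\psi$ is differentiable a.e. (it is semiconcave) and $\na U^\rho$ is log-Lipschitz hence continuous by Theorem \ref{thm:mon}, at a.e. $x^\ast\in\supp\rho$ we have $\na(2U^\rho)(x^\ast)=\na\psi(x^\ast)=x^\ast-y^\ast$, which is consistent with the transport formula $y=x+2\na U^\rho$ of Theorem B; integrating the equality of gradients along $\supp\rho$ (which, being the closure of a set of full Lebesgue density by the $L^\infty$ bound on $f$, is ``thick'' enough) and using the global inequality to pin the constant forces $2U^\rho=\psi$ on $\supp\rho$. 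The cleanest route, which I would actually write, is: on $\{\rho>0\}$ the first variation is two-sided, so \eqref{sully} holds with \emph{both} signs of $t$, upgrading \eqref{eq:mon} to the equality $2U^\rho(x_0)+\tfrac12|x_0-y^\ast|^2 = 2U^\rho(x^\ast)+\tfrac12|x^\ast-y^\ast|^2$ for $x_0$ in the support of $\varphi^\ast$, i.e. near $x^\ast$; letting $\e\to0$ this pins $2U^\rho-\psi\equiv 0$ on $\supp\rho$.

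For the second assertion, suppose $\supp\rho$ had empty interior. Then $\supp\rho$ has Lebesgue measure zero, contradicting $\rho(\R^n)=1$ together with $d\rho=f\,dx$, $f\in L^\infty$, from Theorem B (equivalently Theorem \ref{thm:mon}): $1=\int f\,dx=\int_{\supp\rho}f\,dx\le \|f\|_\infty\,|\supp\rho|=0$. Hence $\supp\rho$ has positive measure; to get genuinely nonempty interior, note that $\{\rho>0\}$ has positive measure, and by Lebesgue density a.e. such point is a density-one point, around which — using that $f$ equals the a.e.-defined density and $U^\rho$ solves the obstacle problem \eqref{eq:main-obs} with the first equation holding a.e. — one shows $\rho>0$ on an actual ball. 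The main obstacle I anticipate is the first part: making rigorous the passage from the one-sided inequality \eqref{eq:mon} to the two-sided equality $2U^\rho=\psi$ on $\supp\rho$, i.e. justifying that on $\{\rho>0\}$ one may vary $\rho$ in \emph{both} directions (the competitor $\rho+t(\varphi_0-\varphi^\ast)$ must remain a nonnegative measure for small $t$ of either sign, which requires $\varphi^\ast\le\rho$ and $\varphi_0\le\rho$ locally — true by the construction of $\gamma^\ast_\e$ and a symmetric argument moving mass the other way), and then identifying the resulting identity, in the limit $\e\to0$, with the obstacle-type equality $U^\rho=\psi$ rather than merely an inequality.
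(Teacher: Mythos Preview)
Your treatment of the identity $U^\rho=\psi$ on $\supp\rho$ follows the paper's line: both \eqref{albats} and \eqref{eq:mon} assert that $\psi$ and $2U^\rho$ share the same $c$-superdifferential at every point of $\supp\gamma$, and the paper simply records this and concludes. Your telescoping of \eqref{eq:mon} along chains to obtain the global inequality $2U^\rho-2U^\rho(x_0)\ge\psi$ is a correct and useful elaboration that the paper omits; the subsequent gradient-matching and the alternative two-sided variation route are reasonable ways to close the argument, though the latter is not in the paper and, as you note, needs $x_0\in\supp\rho$ with positive density nearby to make the competitor $\rho+t(\varphi_0-\varphi^\ast)$ admissible for $t<0$.

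The genuine gap is in the second assertion. Your opening implication ``$\supp\rho$ has empty interior $\Rightarrow$ $\supp\rho$ has Lebesgue measure zero'' is false (fat Cantor sets have positive measure and empty interior), so the contradiction with $1=\int f\le\|f\|_\infty|\supp\rho|$ does not fire. You catch this and retreat to ``Lebesgue density plus the obstacle equation \eqref{eq:main-obs} forces $\rho>0$ on an actual ball'', but that is precisely the nontrivial step and you leave it unproved. The paper argues differently: after matching $c$-superdifferentials it records that at a free boundary point one has $x^\ast=y^\ast$ and $\nabla U^\rho(x^\ast)=0$, and then invokes the log-Lipschitz continuity of $\nabla U^\rho$ (from Theorem~\ref{thm:mon}) to conclude nonempty interior. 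In other words, the paper leans on the $C^1$-regularity of $U^\rho$ and the obstacle-problem structure rather than on a bare measure bound; an $L^\infty$ density alone cannot rule out a nowhere-dense support.
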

\begin{proof}

In view of  \eqref{albats} and \eqref{eq:mon} $U^\rho$ and $\psi$ have the same c-subdifferential on  $\supp \rho$ then it follows that  
$U^\rho=\psi$ and at free boundary point $x^\ast=y^\ast$ we have 
$\na U^\rho(x^\ast)=0$. The last claim follows from 
the log-Lipschitz continuity of $\na U^\rho$.
\end{proof}

\section{The nonlocal Monge-Amp\`ere equation}\label{sec:7}
From Corollary \ref{cor:pizza} we have 
\[
y(x)=x+2\na U^\rho(x).
\]
Consequently,  the prescribed Jacobian equation is  
\[
\det(Id+2D^2 U^\rho)=\frac{\rho(x)}{\rho_0(x+2\na U^\rho)}.
\]
Note that this is a nonlocal Monge-Amp\`ere equation. By 
standard $W^{2, p}$ estimates for the potential $U^\rho$
it follows that $\supp\rho_0\setminus\supp\rho$ has vanishing Lebesgue measure.

Let $h>0$ be small and consider the perturbed energy
\[
\frac h2\iint K(x-y)d\rho d\rho+d^2(\rho, \rho_0).
\]
Linearizing  the equation 
\[
\det(Id+hD^2 U^\rho)=\frac{\rho(x)}{\rho_0(x+h\na U^\rho)}
\]
we 
\begin{eqnarray*}
\rho(x)
&=&
[1+h\Delta U^\rho+O(h^2)] \rho_0(x+h\na U^\rho)\\
&=&
[1+h\Delta U^\rho+O(h^2)] (\rho_0(x)+h\na \rho_0(x)\na U^\rho+O(h^2)).
\end{eqnarray*}
Consequently 
\[
\rho(x)-\rho_0(x)=h\na \rho_0(x)\na U(x)+h\Delta U^\rho(x) \rho_0(x)+O(h^2)
\]
or after iteration $\rho_0, \rho_1, \rho_2, \dots$ with step $\frac h2$ we get
\[
\rho_k(x)-\rho_{k-1}(x)=h\na \rho_{k-1}(x)\na U^\rho(x)+h\Delta U^\rho(x) \rho_{k-1}(x)+O(h^2). 
\]
Therefore, sending $h\to0$ we obtain the equation 
\[
\p_t \rho=\na \rho\na U^\rho+\Delta U^\rho \rho=\div(\rho\na U^\rho). 
\]

\section{Regularity of free boundary}\label{sec:8}
Let $x^\ast \in \supp \rho$, then  from \eqref{eq:mon} we have for every $x$ 
\[
U^\rho(x^\ast)\le U^\rho(x)+\frac14\left[|x-x^\ast|^2-|x^\ast-y^\ast|^2\right].
\]
Therefore $U^\rho(x^\ast)\le U^\rho(x)$ if $x\in B_{|x^\ast-y^\ast|}(x^\ast):=B$ and $x^\ast\not =y^\ast$.
Consequently  $U^\rho$ has local 
minimum in $\overline B$ at $x^\ast\in \p B$, and since $U^\rho$ is  superharmonic in $\R^2$ it
 follows from Hopf's lemma, applied to a ball with diameter $x^\ast y^\ast, $ that the normal derivative $\p_\nu U^\rho(x^\ast)<0$ 
 where $\nu=\frac{x^\ast-y^\ast}{|x^\ast-y^\ast|}$. 
Hence  at the remaining free boundary points we must have $x^\ast =y^\ast$ and hence $\na \psi(x^\ast)=0$.
 
\medskip

\begin{defn}
Let $T$ be the transport map. We say that $x\in \supp\rho\cap \supp \rho_0$ is a singular free boundary point if $x=T(x), \na U^\rho(x)=0$
and 
$$
\limsup_{t\downarrow 0}\frac1{|B_t|}\int_{B_t(x)}\rho=0.
$$ 
The set of singular points is denoted by $S$.
\end{defn}

\begin{lem}\label{lem:ellipse}
Let $0$ be a singular free boundary point and $\rho_0\ge s>0$ on $\supp\rho_0$. 
Then for every small $\e>0$ there is $R^\ast>0$ such that the set of singular points in $B_R, R<R^\ast$ can be trapped between two parallel planes 
at distance $\frac{\sqrt {8n+1}}{(sc_n)^{\frac1{2n}}} \e^{\frac1{2n}}R$
where $c_n=|B_1|$. 
\end{lem}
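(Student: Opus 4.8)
The plan is to combine the density hypothesis at the singular point with the optimal–transport structure to produce a volume bound near the origin, and then to convert that volume bound into a thin slab.

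\textbf{Setup and mass bound.} After translating, let $0$ be the given singular free boundary point, so $\limsup_{t\downarrow0}|B_t|^{-1}\int_{B_t}\rho=0$; pick $R^\ast>0$ with $\int_{B_R}\rho\le \e\,|B_R|=\e c_nR^n$ for all $R<R^\ast$. Recall the structure already available: by Theorem B the optimal map is $T(x)=x+2\nabla U^\rho(x)=\nabla v(x)$ with $v(x)=\tfrac12|x|^2+2U^\rho(x)$ convex, $T_\#\rho=\rho_0$, $\supp\rho\subseteq\supp\rho_0$, $\rho_0\ge s$ on $\supp\rho_0$, and $\nabla U^\rho$ is log--Lipschitz. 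By the definition of singular point, $T(x^\ast)=x^\ast$ for every $x^\ast\in S$, so $S\cap B_R\subseteq T(E)$ with $E:=\supp\rho\cap B_R$.

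\textbf{Volume estimate near $0$.} The core step is to upgrade the $\rho$--mass bound to a Lebesgue--volume bound for the coincidence set near $0$. Using the change of variables for the Brenier map together with injectivity $\rho$--a.e. and $\rho_0\ge s$, one gets
\[
s\,|T(E)|\ \le\ \int_{T(E)}\rho_0\ =\ \int_E\rho\ \le\ \e c_nR^n ,
\]
so $S\cap B_R$ sits inside a set of measure at most $\tfrac{\e c_n}{s}R^n$. Because $\supp\rho$ and $\supp\rho_0$ coincide up to a Lebesgue--null set and $T$ is a homeomorphism that is quantitatively close to the identity near $0$ (via the log--Lipschitz bound on $\nabla U^\rho=\tfrac12(T-\mathrm{id})$ and a degree argument on small spheres), one then transfers this to a bound $|\supp\rho\cap B_{r}|\le C\tfrac{\e c_n}{s}\,r^{n}$ for radii $r$ comparable to $R$; thus $\supp\rho$, and a fortiori $S$, occupies only a small volume fraction of $B_r$.

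\textbf{From small volume to a slab, and the main obstacle.} It remains to convert ``$\supp\rho\cap B_r$ occupies volume fraction $\le C\e/s$'' into ``$S\cap B_r$ lies between two parallel planes at distance $\tfrac{\sqrt{8n+1}}{(sc_n)^{1/2n}}\e^{1/2n}r$''. Here I would realize $S\cap B_r$ inside a convex body: either as a sublevel set $\{v\le \mathrm{const}\}\cap B_r$ of the convex potential $v$, or — using the non-degeneracy inequality \eqref{eq:mon}, which squeezes the coincidence set between $\{U^\rho=\psi\}$ and a neighborhood of the zero set of the quadratic second-order blow-up of $U^\rho-\psi$, contained in a hyperplane — directly as a thin convex neighborhood of that hyperplane; then a Steinhagen-type inequality relating the inradius and the minimal width of a convex body of controlled volume yields the two planes, with the exponent $1/2n$ and the constant $\sqrt{8n+1}$ dropping out of that elementary estimate (and then feeding into the bound for $\om(r)$). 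The hard part is exactly this last conversion: a set of small Lebesgue measure inside $B_r$ need not be flat, so one must genuinely exploit a convex or near-convex enclosure of $S\cap B_r$, and checking that the enclosure is simultaneously valid and tight enough — which is precisely where the log--Lipschitz continuity of $\nabla U^\rho$ and the inequality \eqref{eq:mon} are indispensable — is the technical heart of the proof.
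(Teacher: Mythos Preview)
Your argument has a genuine gap at the ``volume to slab'' conversion, and the paper's proof takes a quite different route that avoids it entirely.

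You correctly extract from mass balance and $\rho_0\ge s$ the volume estimate $|T(E)|\le \tfrac{\e c_n}{s}R^n$, hence $|S\cap B_R|$ is small. But, as you yourself note, a small-volume set need not lie in a thin slab; some convex enclosure of $S\cap B_R$ is required, and neither of your two candidates works as stated. The sublevel sets $\{v\le c\}$ are convex, but $S$ is not one of them, nor is it sandwiched between two close sublevels; and inequality \eqref{eq:mon} only gives, for a single pair $(x^\ast,y^\ast)\in\supp\gamma$, that $x\mapsto 2U^\rho(x)+\tfrac12|x-y^\ast|^2$ is minimized at $x^\ast$ --- this does not produce a quadratic blow-up of $U^\rho-\psi$ along a hyperplane without further input. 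The ``transfer'' step from $|T(E)|$ small to $|\supp\rho\cap B_r|$ small is also unjustified: log--Lipschitz continuity of $\nabla U^\rho$ does not give any uniform Jacobian bound for $T$ or $T^{-1}$.

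The paper's argument is different in spirit. It takes the convex hull $\mathcal K$ of $S\cap B_R$ and its John ellipsoid $x_0+E$, with smallest semi-axis $r$; the goal is to bound $r$. The mass-balance/density step is used not to bound a volume but to locate a single transport pair: there is $A\in B_{r/2n}(x_0)\cap\supp\rho_0$ and $B=T^{-1}(A)$ with $|B|\gtrsim (sc_n/\e)^{1/n}r$, i.e.\ $B$ is forced far from the origin because the $\rho$-mass near $0$ is tiny. The decisive tool is then \emph{cyclic monotonicity} applied to the pair $(B,A)$ and any singular point $x_s$ (a fixed point of $T$): $(x_s-A)\cdot(x_s-B)\ge 0$, so every $x_s$ lies outside the ball with diameter $AB$. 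A short planar computation (the angle $\alpha$ in the paper's Figure~\ref{fig2}) shows that the portion of $B_R$ outside this ball lies on one side of a hyperplane at distance $\le 4R^2/|AB|$ from $A$; since $A$ is within $r/2n$ of the John center, this yields $\tfrac{r}{2n}\le \tfrac{4R^2}{|AB|}$ and hence $r\le \tfrac{\sqrt{8n+1}}{(sc_n)^{1/2n}}\e^{1/2n}R$. The convexity you were searching for is supplied by John's ellipsoid on the \emph{convex hull} of $S$, and the flatness comes from one well-chosen monotone pair, not from a volume inequality.
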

\begin{proof}
Let $\mathcal K$ be the convex hull of the singular set in $B_R$. Then there is 
$x_0\in B_R$ and an ellipsoid $E$ (John's ellipsoid \cite{deGuzman} page 139) so that 
\[
x_0+\frac1n E\subset \mathcal K\subset x_0+E.
\]

Let $r$ be the smallest axis of 
$E$. By mass balance condition 
\begin{equation}\label{eq:star1}
\int_{B_a} \rho(x)dx=\int_{T(B_a)}\rho_0(y)dy.
\end{equation}
By assumption $0$ is a singular point, so we have $\limsup\limits_{t\downarrow 0}\frac1{|B_t|}\int_{B_t} \rho(x)dx=0$.
Thus for every $\e>0$ small there is $a_0$ such that 
\begin{equation}\label{eq:star2}
\int_{B_a}\rho(x)dx\le \e a^n\quad  \mbox{whenever}\  a<a_0.
\end{equation}
By assumption $\rho_0>s>0$  then 
\begin{equation}\label{eq:star3}
\int_{T(B_a)} \rho_0
\ge 
\int _{B\frac {r}{2n}\left( x_{0}\right) }\rho _{0}\left( y\right) dy
 > 
 sr^{n}c_n
 \end{equation} while 
 $ \int _{B_a}\rho\left( x\right) dx < \varepsilon a^{n}. $ 
  
 Consequently, combining \eqref{eq:star1}-\eqref{eq:star3} we get 
$
\varepsilon a^{n} > s r^{n}c_n$
or  
\begin{equation}\label{eq:a}
a > \left[ \dfrac {s c_n }{\varepsilon }\right] ^{\frac {1}{n}}r.
\end{equation}

It follows that (for small $R$ and $\e$) there is 
a point 
$A\in B_{\frac {r}{2n}} \left( x_0\right)\cap \{\rho_0>0 \}$
and $B\in \{\rho>0\}$ so that $|OB| \sim a$ and  $T^{-1}\left( A\right) =B$.

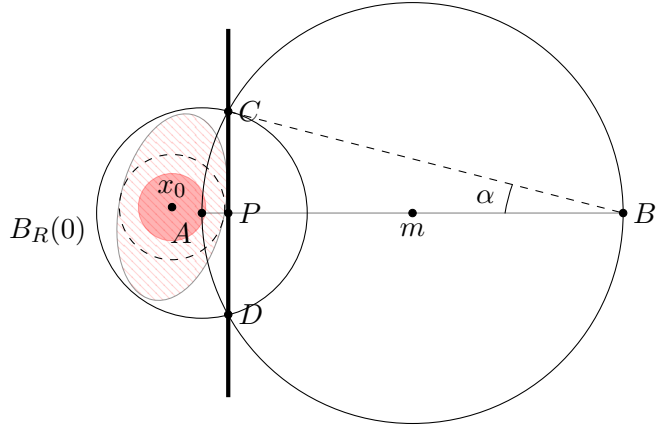
\begin{figure}
\begin{center}

\begin{tikzpicture}[scale=0.7]
\draw[black] (1, 0) circle (2cm);
\draw[pattern=north west lines, pattern color=red, rotate=78, opacity=0.4] (0.2,-0.4) ellipse (1.8cm and 1.0cm);
\filldraw[red, rotate=78, opacity=0.3] (0.2, -0.4) circle (0.64cm);
\draw[dashed, rotate=78] (0.2, -0.4) circle (1.0cm);

\draw[black] (5, 0) circle (4cm);
\draw[black, opacity=0.5] (1,0) -- (9, 0);
\draw[black, ultra thick] (1.5,3.5)  -- (1.5,-3.5) ;
\draw[black, dashed] (1.5,1.93) -- (9,0);

  \filldraw[black] (5,0) circle (2pt) node[anchor=north] {$m$};
  \filldraw[black] (9,0) circle (2pt) node[right] {$B$};
    \filldraw[black] (1,0) circle (2pt) node[anchor=north east] {$A$};
        \filldraw[black] (1.5,0) circle (2pt) node[anchor=west] {$P$};

  \filldraw[black] (1.5,1.93) circle (2pt)node[anchor=west] {$C$};
  \filldraw[black] (1.5,-1.93) circle (2pt)node[anchor=west] {$D$};
    \filldraw[black, rotate=78] (0.2,-0.4) circle (2pt) node[anchor=south] {$x_0$};
\draw (6.9,0.55) arc (160:171:3)node[anchor=south east] {$\alpha$};
    \filldraw[black]  (-1.0,-0.33) node[anchor=east] {$B_R(0)$};
\end{tikzpicture}
\end{center}
\caption{The construction used in the proof of Lemma \ref{lem:ellipse}. 
The red ball is $B_{\frac r{2n}}(x_0)$, and $T(A)=B$.}
\label{fig2}
\end{figure}


Let $x_s$ be a singular point. Notice that 
$x_{s}=T\left( x_{s}\right)$, i.e. the singular free boundary points are 
fixed points.
 From the monotonicity \eqref{urbs} 
\[
 \left( x_{s}-A\right) \left( T^{-1}\left( x_{s}\right) -T^{-1}\left( A\right) \right) \geq 0
 \]
or $\left( x_{s}-A\right) \left( x_{s}-B\right) \geq 0$. Let 
$m=\dfrac {A+B}{2}$ be the midpoint of the segment $AB$, then 
\begin{eqnarray*}
\left( x_{s}-A\right) \left( x_{s}-B\right) 
&=&
\left( x_{s}-B+B-A\right) \left( x_{s}-B\right) \\
&=&
\left| x_{s}-B\right| ^{2} +\left( B-A\right) \left( x_{s}-B\right)\\ 
&=&
\left| x_{s}-B\right| ^{2} -\left( A-B\right) \left( x_{s}-B\right)\\ 
&=&
\left| x_{s}-B\right|^2-2\dfrac {A-B}{2}\left( x_{s}-B\right) +\left| \dfrac {A-B}{2}\right| ^{2}-\left| \dfrac {A-B}{2}\right| ^{2}\\
&=&
|x_s-m|^2 -\left| \dfrac {A-B}{2}\right| ^{2}
\geq 0
\end{eqnarray*}
because 
\[
\left| x_{s}-B-\dfrac {A-B}{2}\right| ^{2}=\left| x_{s}-m\right| ^{2}.
\]
Hence  we arrive at 

\[
\left| x_{s}-m\right| ^{2}\geq \left| \dfrac {A-B}{2}\right| ^{2}.
\]
From simple geometric considerations we have that 
(see Figure 2)
$$\left| AP\right| =\left| AB\right| -\left| CB\right| \cos \alpha =\left| AB\right| -\left| AB\right| \cos ^{2}\alpha =\left| AB\right| \sin ^{2}\alpha.$$
Note that  
$\sin \alpha =\dfrac {\left| AC\right| }{\left| AB\right| }\le  \dfrac {2R}{\left| AB\right| }$, 
hence it follows that 

\[
\left| AP\right| \le \left| AB\right| \dfrac {4R^{2}}{\left| AB\right| ^{2}}=\dfrac {4R^{2}}{\left| AB\right| }.
\]

Therefore  $S\cap B_R$ is on one side of the hyperplane containing the intersection $B_R$ and the ball with diameter $AB$, see Figure \ref{fig2}.
Hence 
\[
\dfrac {r}{2n}\leq \dfrac {4R^{2}}{\left| AB\right| }
\]
or, in view of \eqref{eq:a},  we get $4R^{2}\geq \dfrac {r}{2n}\left[r\left( \dfrac {sc_n}{\varepsilon }\right) ^{1/n}-R\right]$.
From here 
$$
\dfrac {r^{2}}{2n}\left( \dfrac {sc_n}{\varepsilon }\right) ^{1/n}\leq R^{2}(4+\frac1{2n})
$$
implying $r \le  
\frac{\sqrt {8n+1}}{(sc_n)^{\frac1{2n}}} \e^{\frac1{2n}}R$ and the proof is complete. 
\end{proof}

\begin{lem}
Let $\om(R)$ be the height of the slab containing $S\cap B_R$ (see \eqref{small-o}),
$B_i=B_{r_i}(x_i)$ a collection of disjoint balls included in $B_R$ with $x_i\in S$. 
Then for every  $\beta>n-1$ we have 
\[
\sum r_i^\beta\le C\frac{R^\beta}{\om^{n-1}(R)}\frac1{1-\om^{\beta-(n-1)}(R)}
\]
\end{lem}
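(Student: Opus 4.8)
The plan is to combine the slab-trapping estimate from Lemma \ref{lem:ellipse} with a Vitali-type covering argument, exploiting the fact that the singular set sits in a thin strip and hence is nearly $(n-1)$-dimensional. First I would fix $R$ and recall from the definition \eqref{small-o} of $\om(R)$ that $S\cap B_R$ is contained in a slab $\Sigma_R$ of width $\om(R)R$ between two parallel hyperplanes. Each ball $B_i=B_{r_i}(x_i)$ with $x_i\in S$ must, by the same scale-invariant argument, have $S\cap B_i$ trapped in a slab of width $\le \om(r_i)r_i\le\om(R)r_i$ (using monotonicity of $\om$, which follows from its definition as a supremum over $r\le R$). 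The key geometric point is that a ball $B_{r_i}(x_i)$ whose center lies in the thin global slab $\Sigma_R$ occupies, inside $\Sigma_R$, a region essentially shaped like a disk of radius $r_i$ times thickness $\min(r_i,\om(R)R)$; so the "$(n-1)$-dimensional footprint" of each $B_i$ has measure comparable to $r_i^{n-1}$ whenever $r_i\gtrsim\om(R)R$, and to $r_i^{n-1}$ times a correction otherwise.

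The main step is the packing estimate. Projecting everything onto the hyperplane $H$ parallel to the global slab, the disjoint balls $B_i$ project to sets of $\H^{n-1}$-measure $\gtrsim r_i^{n-1}$ — but this is only true when $r_i$ is not too small relative to the slab width, which is exactly where the factor $1-\om^{\beta-(n-1)}(R)$ will enter. I would therefore dyadically decompose the radii: write $S\cap B_R\subset\bigcup_j A_j$ where $A_j$ collects those $x_i$ with $r_i\sim 2^{-j}R$, and on each scale apply Lemma \ref{lem:ellipse} at that scale to bound $\#A_j$ by $C\,(R/r_i)^{n-1}/\om^{n-1}(R)$ — the $\om^{n-1}(R)$ in the denominator coming from the fact that at the coarsest scale the singular set already lives in a slab of relative width $\om(R)$, so its $(n-1)$-dimensional "cross-section" is only a $\om^{n-1}(R)$ fraction of what a full $(n-1)$-plane slice would give, and disjointness forces $\#A_j\,r_i^{n-1}\lesssim \om^{n-1}(R)^{-1}R^{n-1}$ after accounting for the slab geometry. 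Summing $\sum_i r_i^\beta=\sum_j\#A_j\,(2^{-j}R)^\beta\le C\,\om^{-(n-1)}(R)R^\beta\sum_j 2^{-j(\beta-(n-1))}$, and the geometric series in $j$ — truncated at the scale where $2^{-j}R\sim\om(R)R$, i.e. $2^{-j}\sim\om(R)$ — sums to $\frac{1}{1-\om^{\beta-(n-1)}(R)}$ since $\beta>n-1$ makes $\om^{\beta-(n-1)}(R)<1$.

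The hard part will be making the slab-geometry bookkeeping honest: Lemma \ref{lem:ellipse} gives a slab containing \emph{all} singular points in a ball, but to bound the \emph{number} of disjoint balls packed into a thin slab I need the quantitative statement that disjoint balls of radius $\rho$ with centers in a slab of width $w$ and lateral extent $L$ number at most $C(L/\rho)^{n-1}\max(1,w/\rho)$, and I must track that $w\le\om(R)R$ uniformly. I would prove this packing bound by the standard volume argument: the balls $B_i$ with $r_i\ge w$ are disjoint, each meets the slab in volume $\gtrsim w\,r_i^{n-1}$, and all lie in a slab of total volume $\lesssim w L^{n-1}$, giving $\sum_{r_i\ge w} r_i^{n-1}\lesssim L^{n-1}$; meanwhile the finitely many scales with $r_i<w$ contribute a convergent tail precisely because $\beta>n-1$. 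Rescaling $L\sim R$ and inserting the slab width $w\sim\om(R)R$ produces the claimed $R^\beta/\om^{n-1}(R)$ with the series factor, completing the proof.
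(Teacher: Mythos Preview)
Your overall strategy---slab trapping, a packing count, and a geometric series---is the right one and is what the paper does. But there is a genuine gap: you never actually \emph{iterate} the slab property at finer scales, and without that iteration the small balls cannot be controlled. Concretely, your claimed bound $\#A_j\le C(R/r_i)^{n-1}/\om^{n-1}(R)$ is unjustified. Using only the \emph{global} slab of width $\om(R)R$, the volume argument you sketch gives $\sum_{r_i\ge\om(R)R}r_i^{n-1}\lesssim R^{n-1}$ for the large balls (with no $\om^{-(n-1)}$ needed), but for $r_i<\om(R)R$ the balls can pack $n$-dimensionally inside the slab and you only get $\#A_j\lesssim R^{n-1}\om(R)R/r_i^{n}$, which makes $\sum r_i^\beta$ blow up for $n-1<\beta<n$. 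Your assertion that ``the finitely many scales with $r_i<w$ contribute a convergent tail precisely because $\beta>n-1$'' is therefore unsupported. Separately, a dyadic series $\sum_j 2^{-j(\beta-(n-1))}$ sums to a constant depending only on $\beta$, not to $\frac{1}{1-\om^{\beta-(n-1)}(R)}$; the latter can only arise from a decomposition with ratio $\om(R)$.

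The paper's remedy is exactly that: decompose by powers of $\om(R)$ rather than by powers of $2$, setting $\F_m=\{B_i:\ R\om(R)^{m+1}<r_i\le R\om(R)^m\}$. At level $0$ the balls have $r_i>R\om(R)$, hence each meets the mid-hyperplane of the global slab in a disk of diameter $\gtrsim R\om(R)$, and disjointness of these disks gives $\#\F_0\lesssim\om(R)^{-(n-1)}$---this is the true source of the $\om^{-(n-1)}$ factor. The remaining balls are then covered by $\lesssim\om(R)^{-(n-1)}$ balls of radius $\sim R\om(R)$, and \emph{inside each of those} one re-applies the slab property at the new scale to get a slab of width $\le R\om(R)^2$, so the level-$1$ balls (radii $>R\om(R)^2$) again meet the new mid-hyperplane in large disks. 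Iterating, $\sum_{\F_m}r_i^\beta\le\om(R)^{-(m+1)(n-1)}\bigl(R\om(R)^m\bigr)^\beta=\frac{R^\beta}{\om(R)^{n-1}}\,\om(R)^{m(\beta-(n-1))}$, and summing over $m$ gives the stated bound. The missing idea in your sketch is this self-similar re-application of the slab estimate at each scale $R\om(R)^m$; once you insert it, your argument becomes the paper's.
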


\begin{proof}
Rotate  the coordinate system 
such that $x_n$ points in the direction of the normal of the parallel planes
which are $\o{R}$ apart and contain $S\cap B_R$. 
Let $\F_0$ be the collection of the balls satisfying $R\om(R)<r_i\le R$. If $B_i\in \F_0$ then 
$\diam\(B_i\cap \{x_n=0\}\)\ge \frac12R\o R$. Therefore there are at most 
\[
\frac{R^{n-1}}{\(\frac12  R \o R\)^{n-1}}=\frac{2^{n-1}}{\(\o R\)^{n-1}}
\]
such balls. 
Thus we have 
$$\sum_{B_i\in \F_0}r^\beta_i\le \frac{2^{n-1}}{\(\o R\)^{n-1}}R^\beta$$
and $\{ B_i\}\setminus \F_0$ can be covered by balls $\widehat B_{4R\o R}(y_j)$ such that  
$y_j\in\{x_n=0\}\cap B_R$ and $1\le j\le \frac1{\(\o R\)^{n-1}}$. 
For each $j$ we have $S\cap \widehat  B_{4R\o R}(y_j)$ is contained in the slab of width 
$$
R\o R\(\o {R\o R}\)\le R\(\o R\)^2.
$$
 Hence  let  $\F_1$  be the collection of the balls $B_i$ contained in $\cup_j \widehat  B_{4R\o R}(y_j)$
 and satisfying $R\(\o R\)^2<r_i\le R\o R.$
 Then every ball $B_i$ in $\F_1$ intersects $\{x_n=0\}$ such that $\diam(B_i\cap \{x_n=0\})\ge \frac12 R\(\o R\)^2$
 and the number such balls $B_i$ is at most
 \[
 \frac{\(R\o R\)^{n-1}}{\(R\(\o R\)^2\)^{n-1}}=\frac1{\(\o R\)^{n-1}}.
 \]
 Consequently 
$$
\sum _{B_i\in \F_1}r_i^\beta=\frac1{\(\o R\)^{n-1}}\sum_{B_i\in \widehat B_{R\o R}(y_1)}\(R\o R\)^\beta\le \frac{R^\beta}{\(\o R\)^{2(n-1)}}.
$$
 
 Again, as above  we can choose at most $\frac1{\(\o R\)^{n-1}}$ balls $\widehat B_{R\(\o R\)^2}(y_l), l\le \frac1{\(\o R\)^{n-1}}$
 that cover $\{B_i\}\setminus(\mathcal F_0\cup\mathcal F_2)$.
 We define $\F_m$ inductively such that  
 $R\(\o R\)^{m}<r_i\le R\(\o R\)^{m-1}$ for $B_i\in \F_m$, then repeating the argument above we have that 
 $$\sum_{B_i\in \F_m}r_i^\beta\le \(\frac1{\(\o R\)^{n-1}}\)^{m+1}\(R\(\o R\)^m\)^{\beta}.$$
 Therefore 
 \begin{eqnarray*}
 \sum_i r_i^\beta
 &\le&
\sum_{m=0}^\infty \sum_{B_i\in \F_m} r_i^\beta\le \sum_{m=0}^\infty \(\frac1{\(\o R\)^{n-1}}\)^{m+1}\(R\(\o R\)^m\)^{\beta}=\\
&=&
\frac{R^{\beta}}{\(\o R\)^{n-1}}\sum _{m=0}^\infty \(\(\o R\)^{\beta-(n-1)}\)^m\\\
&=&
\frac{R^{\beta}}{\(\o R\)^{n-1}}\frac1{1-\(\o R\)^{\beta-(n-1)}}.
 \end{eqnarray*}

\end{proof}
Now we can finish the proof of Theorem C. 
\begin{thm}
Suppose $\o R= R^\sigma$, then there is $\sigma'>0$ depending only on 
$n, \sigma$ such that $S\subset M_0\cup\bigcup_{i=1}^\infty M_i$
where $\H^{n-1-\sigma'}(M_0)=0$ and $M_i$ is a  $C^1$ hypersurface 
such that the measure theoretic normal exists at each $x\in S\cap M_i, i\ge 1$.
\end{thm}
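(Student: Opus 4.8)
The plan is to extract a rectifiability/dimension statement from the measure-theoretic growth bound in the previous lemma, following the classical covering/dimension machinery used for the singular set of the obstacle problem (as in Caffarelli's work \cite{C-ob}). The key input is that under $\om(R)=R^\sigma$, the previous lemma gives, for any disjoint family of balls $B_i=B_{r_i}(x_i)$ with centers in $S\cap B_R$,
\[
\sum_i r_i^\beta\le C\,\frac{R^\beta}{R^{\sigma(n-1)}}\cdot\frac1{1-R^{\sigma(\beta-(n-1))}}
\]
whenever $\beta>n-1$. For $\beta$ close to $n-1$ the exponent of $R$ on the right is $\beta-\sigma(n-1)$, which exceeds $\beta$ minus a fixed positive amount; this is exactly a packing estimate at scale $R$ that upgrades to a Hausdorff dimension bound $\dim_{\mathcal H}(S)\le n-1-\sigma'$ with $\sigma'=\sigma'(n,\sigma)>0$ obtained by optimizing over $\beta$.

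**First I would** set up the dichotomy that underlies the decomposition $S=M_0\cup\bigcup_{i\ge 1}M_i$. At each $x\in S$ one distinguishes points according to the behavior of the slab height $\om(r)$ as $r\downarrow 0$ at that point: the ``good'' points, where the convex hull of $S\cap B_r(x)$ flattens fast enough that $S$ lies near a single hyperplane with a quantitative rate (giving a measure-theoretic normal, hence a $C^1$ graph via a Whitney-type extension / Reifenberg-flatness argument), are collected into the countable union $\bigcup_{i\ge1}M_i$; the remaining ``bad'' points, where the flattening is too slow infinitely often, form $M_0$. The content of Lemma~\ref{lem:ellipse} is precisely that a singular point forces $\om(r)\lesssim \e^{1/2n}$ on small scales, i.e. the slab really does shrink; iterating this (as in the previous lemma's inductive covering) yields the geometric decay that feeds both the dimension bound for $M_0$ and the flatness needed for $M_i$.

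**The main obstacle** I expect is making the passage from ``the singular set lies in thin slabs at all scales'' to ``$M_i$ is an honest $C^1$ hypersurface with a measure-theoretic normal at each point of $S\cap M_i$.'' The slab bound alone gives one-sided flatness at the scale $R$, but to produce a $C^1$ surface one needs the normals at consecutive dyadic scales to form a Cauchy sequence, i.e. a summable oscillation $\sum_j \om(2^{-j})<\infty$, which is where the hypothesis $\om(R)=R^\sigma$ (a genuine power, not merely $o(1)$) is used: it gives geometric decay $\om(2^{-j})\le 2^{-\sigma j}$, hence summability and a Hölder modulus for the approximating planes. Packaging this into a Reifenberg / David–Toro type theorem (or directly constructing the $C^1$ graph on each piece $M_i$ where the decay constant is uniform) is the technical heart; the stratification of $S$ by the size of that uniform constant produces the countable family $\{M_i\}$, with the residual set $M_0$ of dimension strictly below $n-1-\sigma'$ handled purely by the packing estimate above.

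**Finally I would** verify the exceptional set estimate $\mathcal H^{n-1-\sigma'}(M_0)=0$ by covering $M_0$ at a fixed small scale by the disjoint-ball families to which the previous lemma applies, summing $\sum r_i^{\beta}$ with $\beta=n-1-\sigma'$ chosen so that the right-hand side $R^{\beta-\sigma(n-1)}/(1-R^{\sigma(\beta-(n-1))})\to 0$ as $R\to 0$ (which forces $\sigma'<\sigma(n-1)$, and one can take e.g. $\sigma'=\tfrac12\sigma(n-1)$, so $\sigma'$ depends only on $n$ and $\sigma$ as claimed), and concluding that the $(n-1-\sigma')$-Hausdorff premeasure of $M_0$ vanishes. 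The remaining routine points — that the $M_i$ can be taken to be $C^1$ hypersurfaces rather than mere graphs, and that $S\cap M_i$ carries a measure-theoretic normal inherited from the flatness — follow from the standard theory and need only be cited.
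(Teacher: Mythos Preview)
Your decomposition of $S$ into ``good'' points (with a measure-theoretic normal, hence lying on $C^1$ hypersurfaces $M_i$) and a residual set $M_0$ matches the paper's, and your treatment of the $M_i$ via summability of $\om(2^{-j})=2^{-\sigma j}$ and a Reifenberg-type argument is in fact more explicit than what the paper writes; the paper essentially takes that part for granted.

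The genuine gap is in your handling of $M_0$. The packing lemma you invoke is only valid for $\beta>n-1$: its proof sums the geometric series $\sum_{m\ge 0}\om(R)^{m(\beta-(n-1))}$, which diverges when $\beta\le n-1$. So you cannot plug in $\beta=n-1-\sigma'$ and let $R\to 0$; the bound $\sum r_i^\beta \le C R^{\beta-\sigma(n-1)}/(1-R^{\sigma(\beta-(n-1))})$ is meaningless in that range. In other words, the slab estimate alone can never push the Hausdorff dimension of any subset of $S$ strictly below $n-1$ --- a set that is merely ``slab-thin'' at all scales can still be a full $(n-1)$-dimensional hyperplane.

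What you are missing is the additional geometric input that distinguishes $M_0$ from the rest of $S$. In the paper, a point lies in $M_0$ precisely when the measure-theoretic normal fails to exist, which forces the projection of $S\cap B_R(x)$ onto some \emph{two}-dimensional plane to have diameter $\lesssim R^{1+\sigma}$ along a sequence of scales --- i.e.\ $M_0$ is thin in \emph{two} independent directions, not just one. This codimension-two flatness is then exploited by a sub-covering step: each Besicovitch ball $B_{r_i}(x_i)$ covering $M_0$ is refined into at most $r_i^{-\frac{\sigma}{2}(n-2)}$ balls of radius $r_i^{1+\sigma/2}$, and only \emph{then} is the packing lemma applied with an exponent $\beta=n-1+\delta>n-1$ (the paper takes $\delta=\sigma/4$). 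Solving $\alpha(1+\tfrac{\sigma}{2})-\tfrac{\sigma}{2}(n-2)=\beta$ for $\alpha$ yields $\alpha=n-1-\tfrac{\sigma}{4}+o(\sigma)$, which is where the strict drop below $n-1$ comes from. Your proposal skips this bootstrapping step entirely, and without it the conclusion $\mathcal H^{n-1-\sigma'}(M_0)=0$ cannot be reached.
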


\begin{proof}
Let $x\in S$ be such that there exists a unique normal in measure theoretic sense, see Definition 5.6 \cite{EG}.
Notice that at the point $x$,  where such normal exists  the set has approximate tangent plane.
Therefore  the projections of $B_r(x)\cap S$ onto two dimensional planes have diameter 
at least $2R$. Thus we let $M_0$ be the subset of $S$ such that 
for $x\in M_0$ there is sequence $R_k\to 0$ such that the projections of 
$B_{R_i}(x)$ onto some two dimensional plane is of order $R^{1+\sigma}$. 

Now let $B_{r_i}(x_i)$ be a Besikovitch type covering of $B_R\cap M_0$.
Let us cover $B_{r_i}(x_i)\cap M_0$ with  balls  of radius $r_i^{1+\frac\sigma2}$, then there are at most 
\[
\frac{r_i^{n-2}}{r^{(n-2)(1+\frac\sigma2)}_i}=\frac1{r^{\frac\sigma2(n-2)}_i}
\]  
such balls. 
Hence for $\alpha>0$ we have 
\[
\sum_i r_i^\alpha\le \sum_i  \frac1{r^{\frac\sigma2(n-2)}_i}r_i^{\alpha(1+\frac\sigma2)}=\sum_i r_i^{\alpha(1+\frac\sigma2)-\frac\sigma2(n-2)}.
\]
Now we choose $\delta=\frac\sigma4$ and $\beta:=n-1+\delta$
and set 
\begin{eqnarray*}
\beta
&:=&
\alpha(1+\frac\sigma2)-\frac\sigma2(n-2)=n-1+\delta.
\end{eqnarray*}
We want to show that for this choice of $\beta$ we get $\alpha=n-1-\sigma'$
for some $\sigma'>0$ depending on $n$ and $\sigma$. Indeed, we have  
\begin{eqnarray*}
\alpha
&:=&
\frac{(n-1)+\delta+\frac\sigma2(n-2)}{1+\frac\sigma2}=
\frac{(n-1)+\frac\sigma4+\frac\sigma2(n-2)}{1+\frac\sigma2}\\
&=&
\((n-1)+\frac\sigma4+\frac\sigma2(n-2)\)\(1-\frac\sigma 2+o(\sigma)\)\\
&=&
n-1+\frac\sigma4(1+2(n-2)-2(n-1))+o(\sigma)\\
&=&
n-1-\frac\sigma 4+o(\sigma)\ge n-1-\sigma'.
\end{eqnarray*}
\end{proof}

\section*{Acknowledgments} 
The author wishes to express his thanks  to Prof. Robert McCann for several helpful comments concerning the related literature.
\begin{bibdiv}
\begin{biblist}

\bib{Ambrosio}{article}{
   author={Ambrosio, Luigi},
   title={Lecture notes on optimal transport problems},
   conference={
      title={Mathematical aspects of evolving interfaces},
      address={Funchal},
      date={2000},
   },
   book={
      series={Lecture Notes in Math.},
      volume={1812},
      publisher={Springer, Berlin},
   },
   date={2003},
   pages={1--52},
   review={\MR{2011032}},
   doi={10.1007/978-3-540-39189-01},
}

\bib{CM}{article}{
   author={Caffarelli, Luis A.},
   author={McCann, Robert J.},
   title={Free boundaries in optimal transport and Monge-Amp\`ere obstacle
   problems},
   journal={Ann. of Math. (2)},
   volume={171},
   date={2010},
   number={2},
   pages={673--730},
   issn={0003-486X},
   review={\MR{2630054}},
   doi={10.4007/annals.2010.171.673},
}

\bib{C-ob}{article}{
   author={Caffarelli, L. A.},
   title={The obstacle problem revisited},
   journal={J. Fourier Anal. Appl.},
   volume={4},
   date={1998},
   number={4-5},
   pages={383--402},
   issn={1069-5869},
   review={\MR{1658612}},
   doi={10.1007/BF02498216},
}

\bib{CMV}{article}{
   author={Carrillo, Jos\'{e} A.},
   author={McCann, Robert J.},
   author={Villani, C\'{e}dric},
   title={Contractions in the 2-Wasserstein length space and thermalization
   of granular media},
   journal={Arch. Ration. Mech. Anal.},
   volume={179},
   date={2006},
   number={2},
   pages={217--263},
   issn={0003-9527},
   review={\MR{2209130}},
   doi={10.1007/s00205-005-0386-1},
}
	
\bib{C-D}{article}{
   author={Carrillo, J. A.},
   author={DiFrancesco, M.},
   author={Figalli, A.},
   author={Laurent, T.},
   author={Slep\v{c}ev, D.},
   title={Global-in-time weak measure solutions and finite-time aggregation
   for nonlocal interaction equations},
   journal={Duke Math. J.},
   volume={156},
   date={2011},
   number={2},
   pages={229--271},
   issn={0012-7094},
   review={\MR{2769217}},
   doi={10.1215/00127094-2010-211},
}

\bib{C-ARMA}{article}{
   author={Ca\~{n}izo, Jos\'{e} A.},
   author={Carrillo, Jos\'{e} A.},
   author={Patacchini, Francesco S.},
   title={Existence of compactly supported global minimisers for the
   interaction energy},
   journal={Arch. Ration. Mech. Anal.},
   volume={217},
   date={2015},
   number={3},
   pages={1197--1217},
   issn={0003-9527},
   review={\MR{3356997}},
   doi={10.1007/s00205-015-0852-3},
}

\bib{MR3805040}{article}{
   author={Carrillo, Jos\'{e}-Antonio},
   author={Santambrogio, Filippo},
   title={$L^\infty$ estimates for the JKO scheme in parabolic-elliptic
   Keller-Segel systems},
   journal={Quart. Appl. Math.},
   volume={76},
   date={2018},
   number={3},
   pages={515--530},
   issn={0033-569X},
   review={\MR{3805040}},
   doi={10.1090/qam/1493},
}

\bib{MR3488544}{article}{
   author={Carrillo, J. A.},
   author={Delgadino, M. G.},
   author={Mellet, A.},
   title={Regularity of local minimizers of the interaction energy via
   obstacle problems},
   journal={Comm. Math. Phys.},
   volume={343},
   date={2016},
   number={3},
   pages={747--781},
   issn={0010-3616},
   review={\MR{3488544}},
   doi={10.1007/s00220-016-2598-7},
}

\bib{Carleson}{book}{
   author={Carleson, Lennart},
   title={Selected problems on exceptional sets},
   series={Van Nostrand Mathematical Studies, No. 13},
   publisher={D. Van Nostrand Co., Inc., Princeton, N.J.-Toronto,
   Ont.-London},
   date={1967},
   pages={v+151},
   review={\MR{0225986}},
}	
	
\bib{DF}{article}{
   author={De Philippis, Guido},
   author={Figalli, Alessio},
   title={The Monge-Amp\`ere equation and its link to optimal transportation},
   journal={Bull. Amer. Math. Soc. (N.S.)},
   volume={51},
   date={2014},
   number={4},
   pages={527--580},
   issn={0273-0979},
   review={\MR{3237759}},
   doi={10.1090/S0273-0979-2014-01459-4},
}

\bib{EG}{book}{
   author={Evans, Lawrence C.},
   author={Gariepy, Ronald F.},
   title={Measure theory and fine properties of functions},
   series={Textbooks in Mathematics},
   edition={Revised edition},
   publisher={CRC Press, Boca Raton, FL},
   date={2015},
   pages={xiv+299},
   isbn={978-1-4822-4238-6},
   review={\MR{3409135}},
}

\bib{Figalli}{article}{
   author={Figalli, Alessio},
   title={The optimal partial transport problem},
   journal={Arch. Ration. Mech. Anal.},
   volume={195},
   date={2010},
   number={2},
   pages={533--560},
   issn={0003-9527},
   review={\MR{2592287}},
   doi={10.1007/s00205-008-0212-7},
}

\bib{deGuzman}{book}{
   author={de Guzm\'{a}n, Miguel},
   title={Differentiation of integrals in $R^{n}$},
   series={Lecture Notes in Mathematics, Vol. 481},
   note={With appendices by Antonio C\'{o}rdoba, and Robert Fefferman, and two
   by Roberto Moriy\'{o}n},
   publisher={Springer-Verlag, Berlin-New York},
   date={1975},
   pages={xii+266},
   review={\MR{0457661}},
}

\bib{JKO}{article}{
   author={Jordan, Richard},
   author={Kinderlehrer, David},
   author={Otto, Felix},
   title={The variational formulation of the Fokker-Planck equation},
   journal={SIAM J. Math. Anal.},
   volume={29},
   date={1998},
   number={1},
   pages={1--17},
   issn={0036-1410},
   review={\MR{1617171}},
   doi={10.1137/S0036141096303359},
}

\bib{Yudovich}{article}{
   author={Judovi\v{c}, V. I.},
   title={Non-stationary flows of an ideal incompressible fluid},
   language={Russian},
   journal={\v{Z}. Vy\v{c}isl. Mat. i Mat. Fiz.},
   volume={3},
   date={1963},
   pages={1032--1066},
   issn={0044-4669},
   review={\MR{0158189}},
}

\bib{K17}{article}{
   author={Karakhanyan, Aram L.},
   title={Remarks on the thin obstacle problem and constrained Ginibre
   ensembles},
   journal={Comm. Partial Differential Equations},
   volume={43},
   date={2018},
   number={4},
   pages={616--627},
   issn={0360-5302},
   review={\MR{3902172}},
   doi={10.1080/03605302.2018.1446446},
}

\bib{Landkof}{book}{
   author={Landkof, N. S.},
   title={Foundations of modern potential theory},
   note={Translated from the Russian by A. P. Doohovskoy;
   Die Grundlehren der mathematischen Wissenschaften, Band 180},
   publisher={Springer-Verlag, New York-Heidelberg},
   date={1972},
   pages={x+424},
   review={\MR{0350027}},
}

\bib{Ledoux}{article}{
   author={Ledoux, Michel},
   author={Popescu, Ionel},
   title={Mass transportation proofs of free functional inequalities, and
   free Poincar\'{e} inequalities},
   journal={J. Funct. Anal.},
   volume={257},
   date={2009},
   number={4},
   pages={1175--1221},
   issn={0022-1236},
   review={\MR{2535467}},
   doi={10.1016/j.jfa.2009.03.011},
}

\bib{MC}{article}{
   author={McCann, Robert J.},
   title={A convexity principle for interacting gases},
   journal={Adv. Math.},
   volume={128},
   date={1997},
   number={1},
   pages={153--179},
   issn={0001-8708},
   review={\MR{1451422}},
   doi={10.1006/aima.1997.1634},
}

\bib{Mehta}{book}{
   author={Mehta, Madan Lal},
   title={Random matrices},
   edition={2},
   publisher={Academic Press, Inc., Boston, MA},
   date={1991},
   pages={xviii+562},
   isbn={0-12-488051-7},
   review={\MR{1083764}},
}
\bib{Rachev}{article}{
   author={Rachev, S. T.},
   title={The Monge-Kantorovich problem on mass transfer and its
   applications in stochastics},
   language={Russian},
   journal={Teor. Veroyatnost. i Primenen.},
   volume={29},
   date={1984},
   number={4},
   pages={625--653},
   issn={0040-361X},
   review={\MR{773434}},
}

\bib{ST}{book}{
   author={Saff, Edward B.},
   author={Totik, Vilmos},
   title={Logarithmic potentials with external fields},
   series={Grundlehren der Mathematischen Wissenschaften [Fundamental
   Principles of Mathematical Sciences]},
   volume={316},
   note={Appendix B by Thomas Bloom},
   publisher={Springer-Verlag, Berlin},
   date={1997},
   pages={xvi+505},
   isbn={3-540-57078-0},
   review={\MR{1485778}},
   doi={10.1007/978-3-662-03329-6},
}

\bib{S}{article}{
   author={Savin, Ovidiu},
   title={A free boundary problem with optimal transportation},
   journal={Comm. Pure Appl. Math.},
   volume={57},
   date={2004},
   number={1},
   pages={126--140},
   issn={0010-3640},
   review={\MR{2007358}},
   doi={10.1002/cpa.3041},
}

\bib{Serfaty}{book}{
   author={Serfaty, Sylvia},
   title={Coulomb gases and Ginzburg-Landau vortices},
   series={Zurich Lectures in Advanced Mathematics},
   publisher={European Mathematical Society (EMS), Z\"{u}rich},
   date={2015},
   pages={viii+157},
   isbn={978-3-03719-152-1},
   review={\MR{3309890}},
   doi={10.4171/152},
}

\bib{TW}{article}{
   author={Trudinger, Neil S.},
   author={Wang, Xu-Jia},
   title={The Monge-Amp\`ere equation and its geometric applications},
   conference={
      title={Handbook of geometric analysis. No. 1},
   },
   book={
      series={Adv. Lect. Math. (ALM)},
      volume={7},
      publisher={Int. Press, Somerville, MA},
   },
   date={2008},
   pages={467--524},
   review={\MR{2483373}},
}

\bib{Villani}{book}{
   author={Villani, C\'{e}dric},
   title={Optimal transport},
   series={Grundlehren der Mathematischen Wissenschaften [Fundamental
   Principles of Mathematical Sciences]},
   volume={338},
   note={Old and new},
   publisher={Springer-Verlag, Berlin},
   date={2009},
   pages={xxii+973},
   isbn={978-3-540-71049-3},
   review={\MR{2459454}},
   doi={10.1007/978-3-540-71050-9},
}

\end{biblist}
\end{bibdiv}
\end{document}